\newtheorem{lem}{Lemma}[section]
\newtheorem{rem}[lem]{Remark}
\newtheorem{prop}[lem]{Proposition}
\newtheorem{thm}[lem]{Theorem}
\newtheorem{cor}[lem]{Corollary}
\newtheorem{defn}[lem]{Definition}
\newtheorem{ex}[lem]{Example}
\theoremstyle{definition}
\theoremstyle{remark}
\DeclareMathOperator{\al}{\alpha}
\DeclareMathOperator{\holim}{holim}
\DeclareMathOperator{\Fun}{Fun}
\DeclareMathOperator{\C}{\mathbb{C}}
\DeclareMathOperator{\Ce}{{\sc C}_{\eta}}
\newcommand{\be}{\beta}
\newcommand{\bQ}{{\mathbb{Q}}}
\renewcommand{\to}{\longrightarrow}
\newcommand{\fib}{\twoheadrightarrow}
\def\mapright#1.{\buildrel #1 \over \longrightarrow}
\def\sc#1{{\mathcal #1}}
\def \C  {\sc C}
\def \D {\sc D}
\def \P {\sc P}
\def\Ce{{\sc C}^{\eta}}
\def \DGA {\sc D \sc G \sc A}
\def\Cen#1{({\sc C}^{\eta})^{\times #1}}
\renewcommand{\bQ}{{\mathbb{Q}}}
\def\Fun  {\operatorname{Fun}}
\def\hfib{\operatorname{hofiber}}
\def\fib{\operatorname{hofib}}
\def\ifiber{\operatorname{ifiber}}
\def\Fun{\operatorname{Fun}}
\def\holim{\operatorname{holim}}
\def\hocolim{\operatorname{hocolim}}
\title{Unbased Calculus for Functors to Chain Complexes}
\author[Basterra]{Maria Basterra}
\address{Department of Mathematics \& Statistics, University of New Hampshire}
\author[Bauer]{Kristine Bauer }
\address{Department of Mathematics \& Statistics, University of Calgary}
\author[Beaudry]{Agn\`es Beaudry}
\address{Department of Mathematics, University of Chicago}
\author[Eldred]{Rosona Eldred}
\address{Department of Mathematics, Universit\"at M\"unster}
\author[Johnson]{Brenda Johnson}
\address{Department of Mathematics, Union College}
\author[Merling]{Mona Merling}
\address{Department of Mathematics, Johns Hopkins University}
\author[Yeakel]{Sarah Yeakel}
\address{Department of Mathematics, University of Illinois at Urbana-Champaign}
\begin{document}
\maketitle
\thispagestyle{plain}
\pagestyle{plain}
\section {Introduction}
In a series of papers published between 1990 and 2003, Tom Goodwillie developed what is now known as the calculus of homotopy functors.  The calculus of homotopy functors associates to a given functor of spaces or spectra $F$,  a so-called Taylor tower of functors and natural transformations, 
\[
\xymatrix{ && F \ar[dl]\ar[d]\ar[dr] \\
\cdots \ar[r] & P_{n+1}F \ar[r] & P_nF \ar[r] & P_{n-1}F \ar[r]  \cdots \ar[r] & P_1F \ar[r] & P_0F,}
\] resembling the Taylor series for functions of  real variables.  In particular, Goodwillie's theory produces a universal $n$-excisive approximation to a homotopy functor $F$ (\cite{G1}, \cite{G2}, \cite{G3}).  Inspired by Goodwillie's work, the fifth author and Randy McCarthy produced a related theory of calculus in an abelian setting which produces what can be thought of as a ``discrete" Taylor tower for a functor (\cite{JM}).  We write $\Gamma_nF$ for the $n$th term of the discrete tower of a functor $F$. 

In the Johnson-McCarthy discrete calculus, a homotopy functor is approximated by a universal {\it degree $n$} functor.  While $n$-excisive functors are necessarily degree $n$, the converse does not generally hold.  The Johnson-McCarthy model was originally developed  for use in algebraic settings, and functors were assumed to be from a pointed category to an abelian category (often, chain complexes).   The hypothesis that the domain category be pointed was more restrictive than what Goodwillie's theory required; nonetheless, the pointed theory has been quite useful.  In particular,
it has been used  successfully to express certain interesting homology theories as derivatives of naturally arising functors.  For example,
Johnson and McCarthy, and Kantorovitz and McCarthy have provided ways of viewing  Andr\'e-Quillen homology as parts of discrete calculus towers (\cite{JM}, \cite{KM}). 


Recently, the Johnson-McCarthy theory of calculus was expanded to include functors from categories that are not necessarily pointed to categories that are not necessarily abelian (\cite{BJM}), more specifically functors from a simplicial model category to a pointed stable simplicial model category.   
At the {\it Women in Topology} workshop, our team endeavored to lay the groundwork for extending some discrete calculus computations of Kantorovitz and McCarthy to the unbased setting.  As a first step, we had to establish the existence of the unbased calculus in the context we needed, namely, for functors to a category of chain complexes of modules over a fixed commutative ring.  




Much of the construction of the Taylor tower in \cite{BJM} carries over readily to this context.   However, one of the essential and most delicate steps in the construction consists in proving that a particular functor $t$ is part of a cotriple.  For this,  one needs to prove  that  certain identities hold up to isomorphism, rather than just up to weak equivalence (see Lemma 2.5 and Appendix A of \cite{BJM}).  In the case of \cite{BJM}, the proof required making careful use of an explicit model for homotopy limits in simplicial model categories and establishing that several key isomorphisms held for that model.   As our target category of chain complexes is not a simplicial model category, we needed to redo this part of the construction before proceeding further.   We do so in this paper.   


In revisiting the proof that $t$ is a cotriple for  functors to chain complexes, we could have tried to use a similar model for homotopy limits (by choosing a framing on our target category) and verified that the required properties could be established for this model as well.   Instead, we chose to take advantage of the fact that in the category of chain complexes, we can construct an explicit model for iterated fibers which allows us to prove directly what we need for the  analogue of $t$ in this context.


The paper is organized as follows.  In Section \ref{s:recap} we define the categories  and terminology we will be working with, and state the main result of this paper:  that for a functor $F$ from an unbased simplicial model category to chain complexes over a commutative ring, one can construct a Taylor tower in which the $n$th term, $\Gamma_nF$, is a degree $n$ approximation to $F$.  In this section, we also outline the proof of this result.      With the exception of the proof that $t$ is a cotriple, as described above, the proofs used in \cite{BJM} carry over to this context of functors  to chain complexes.   In Section \ref{s:ifibers} we describe our models for iterated fibers, starting first with an explicit model for homotopy fibers in the category of chain complexes of modules over a fixed commutative ring.   This model appears in \cite{W}  and seems to be generally well-known.   We include a proof that this construction is equivalent to the standard definition of the homotopy fiber of $f:X\rightarrow Y$ as the homotopy pullback of the diagram $X\rightarrow Y\leftarrow 0$ as this argument does not seem to be in the literature.  Also in this section we use these explicit models to provide  concrete infinite deloopings  of the first terms in our Taylor towers  when evaluated at the initial objects in their source categories.     Section \ref{s:cotriple} contains the main technical result of the paper -- we use our model for iterated fibers to define $t$ and 
 prove that $t$ is a cotriple, thereby completing the proof of the main result of section 2. 
\vskip .2 in 

\noindent{\bf Acknowledgments:}  We would like to thank the scientific committee at the Banff International Research Station for supporting the 2013 {\it Women in Topology} meeting and the Clay Mathematics Institute for a generous grant to finance travel to the workshop.  We  thank Randy McCarthy and Peter May for many helpful conversations in the development of this work.


\section{Unbased Cotriple Calculus}\label{s:recap} 

The goal of any theory of functor calculus is to approximate a homotopy functor $F$  with a tower of   functors $\{ \Gamma_nF\}_{n\geq 0}$ whose individual terms possess properties that make them easier to work with.  The tower  can be considered as a Taylor series approximation to $F$.  In our case, we seek to approximate $F$ by functors that are degree $n$.  In this section, we review the construction of \cite{BJM} for universal degree $n$ approximations to homotopy functors.  

\subsection{Preliminaries}

We work with functors whose source categories are  simplicial model categories.  Let $\sc C$ be such a category.  Unlike the models of cotriple calculus defined in \cite{JM} or \cite{MO}, we will not assume that the category $\sc C$ is based, i.e., that it has the same initial and terminal object.  We specify an initial and terminal object by selecting a morphism $\eta:A\to B$ and letting ${\sc C}^\eta$ be the category of factorizations of $\eta$.  That is, an object $X\in {\sc C}^\eta$ is a diagram $A\to X\to B$ in $\C$ whose composition is $\eta$.  

The category ${\sc C}^\eta$ has a model structure inherited from $\sc C$ as in \cite{MP}, Theorem 15.3.6 or \cite{Q}, Proposition II.2.6.  In particular, a morphism $f:X\to Y$ in ${\sc C}^\eta$ is a commuting diagram
\[ \xymatrix{A \ar[r] \ar@{=}[d] & X \ar[r] \ar[d] & B \ar@{=}[d]\\ A \ar[r] & Y \ar[r] & B,}\]
and we say that $f$ is a weak equivalence, fibration or cofibration if $f$ is so in the underlying category ${\sc C}$.  

Let $R$ be a commutative ring and ${\sc Ch(R)}$ be the category of unbounded chain complexes of $R$-modules.   Recall that there is a model category structure on ${\sc Ch(R)}$ where the weak equivalences are quasi-isomorphisms, the fibrations are levelwise surjections, and $i:A\rightarrow B$ is a cofibration if it is a dimensionwise split inclusion with cofibrant cokernel (\cite{Hovey}, \S 2.3).   An object $M_*$ in ${\sc Ch(R)}$ is cofibrant provided that each entry is projective and any map from $M_*$ to an exact complex is chain homotopic to $0$, i.e., if it is DG-projective (\cite{Hovey}, \S 2.3; \cite{Hovey2}, \S 2.1).   We note that ${\sc Ch(R)}$ is a proper model category (\cite{Hovey3}, Theorem 1.7).  

When constructing the degree $n$ approximation of a functor $F$, we will assume that $F$ is a functor from ${\sc C}^\eta$ to ${\sc Ch(R)}$ that preserves weak equivalences.  In \cite{BJM}, the target category was taken to be a good category of spectra, such as symmetric spectra \cite{HSS} or the $S$-modules of \cite{EKMM}.  The category ${\sc Ch(R)}$ shares several desirable properties with these categories of spectra.  In particular, ${\sc Ch(R)}$ is a stable category (in the sense that square diagrams are homotopy cocartesian if and only if they are homotopy cartesian - see, e.g., \cite{W} for the case $R={\bQ}$) and there are nice models for homotopy limits and colimits in this category.  These properties are sufficient for most of the construction of the discrete calculus, with the exception of the proof that $t$ is a cotriple, as discussed in the introduction.  


\subsection{Cross effects and cotriples.}  Underlying the construction of the functor $\Gamma_nF$ is the notion of the $n$th cross effect of $F$, $cr_nF$, a functor of $n$ variables that measures in some sense the failure of $F$ to be ``additive."   We define the cross effects below.   
We use $\Fun(\Ce,{\sc Ch(R)})$ to denote the category of functors from $\Ce$ to ${\sc Ch(R)}$.  We use ${\P}({\bf n})$ to denote the power set of ${\bf n}=\{1, 2, \dots, n\}$.  We treat this as a category whose morphisms are given by set inclusions.  By an $n$-cubical diagram in a category $\D$, we mean a functor from ${\P}({\bf n})$ to $\D$. 

Cross effects are defined as iterated homotopy fibers of certain $n$-cubical diagrams in ${\sc Ch(R)}$.   The category ${\sc Ch(R)}$ is proper so by  the homotopy fiber of a map $f:P\rightarrow Q$, we mean the homotopy limit of the diagram\[
P\rightarrow Q\leftarrow 0.
\]
 By the iterated homotopy fiber of an $n$-cubical  diagram ${\mathcal X}$ in ${\sc Ch(R)}$, which we denote $\ifiber({\mathcal X})$, we mean the object obtained by first taking homotopy fibers of all maps in one direction, i.e., in the direction  determined by set inclusions $S\subseteq S\cup\{i\}$ with $S\cap \{i\}=\emptyset$, then taking homotopy fibers of these homotopy fibers in a second direction $j\neq i$, and continuing in this fashion until all independent directions have been exhausted.     For the explicit model of the iterated homotopy fiber used in this paper, see Definition \ref{d:ifiber} and Lemma \ref{l:ifiber}.   We use ${\rm ifiber}$ to denote the iterated homotopy fiber of an $n$-cube.
 
 \begin{defn}\label{d:cross}
Let $H$ be a functor in $\Fun(\Ce,{\sc Ch(R)})$, $G$ be a functor in $\Fun(\Cen n,{\sc Ch(R)})$, and ${\bf X}=(X_1, X_2,\dots, X_n)$ be an $n$-tuple  of objects in the category $\Ce$. 
\begin{itemize}

\item Let 
$G^{\bf X}: \P ({\bf n}) \to {\sc Ch(R)}$ 
be the $n$-cubical diagram defined for $U\in \P ({\bf n})$ by
\[G^{\bf X}(U)=G({\bf X}(U))\]
where ${\bf X}(U)$ is the $n$-tuple
$(Z_1(U), Z_2(U), \dots, Z_n(U))
$
with
\[Z_i(U)=\begin{cases}X_i&\text{if\ }i\notin U,\\
B &\text{if\ }i\in U.\\
\end{cases}\]
\item The functor ${t}G: \Cen n\rightarrow {\sc Ch(R)}$ assigns to ${\bf X}=(X_1, \dots, X_n)$ the iterated homotopy fiber of $G^{\bf X}$.
\item Let $\sqcup_n:\Cen n\rightarrow \Ce$ be the functor  that takes the $n$-tuple \\ $(X_1, X_2,\dots, X_n)$ to the coproduct over $A$, $(X_1\amalg_A \cdots \amalg_A X_n)$.  
Associated to ${\bf X}$ is the square diagram $\sqcup_n^{\bf X}: {\P}({\bf n})\rightarrow \Ce$.   The $n$th cross effect of $H$ is the functor $cr_nH: \Cen n\rightarrow{\sc Ch(R)}$ given by 
\[cr_nH(X_1, X_2, \dots, X_n):=\ifiber H(\sqcup_n^{\bf X})={ t}(H\circ\sqcup_n)({\bf X}).
\]

\item  The functor $\perp_n: \Fun(\Ce,{\sc Ch(R)})\rightarrow \Fun(\Ce,{\sc Ch(R)})$ is obtained by precomposing $cr_n$ with the diagonal.   That is, for an object $X$ in $\Ce$, $\perp_nH(X)=cr_nH(X,X,\dots, X)$.   
\end{itemize}
\end{defn}

\begin{rem}\label{n:hinvariant}

\begin{enumerate}
\item[]
\item The assignment above $G \mapsto tG$ defines an endofunctor on the category $\Fun(\Cen n,{\sc Ch(R)})$ which we will denote by $t$ below.
\item The cross effects of a homotopy invariant functor are homotopy invariant when evaluated on cofibrant objects. 
\end{enumerate}
\end{rem}

\begin{ex}
In the case $n=2$, $cr_2H(X_1, X_2)$ is the iterated homotopy fiber of the square diagram
\[
\xymatrix{H(X_1\coprod_A X_2)\ar[r]\ar[d]&H(B\coprod _A X_2)\ar[d]\\
H(X_1\coprod _A B)\ar[r]&H(B\coprod _A B).}
\]
\end{ex}

The construction of $\Gamma_nF$ in \cite{BJM} depends on establishing that $\perp_n$ is a cotriple on $\Fun(\Ce,{\sc Ch(R)})$.   
To prove  that $\perp_n$ forms a cotriple,  a sequence of adjunctions is constructed
\begin{equation}\label{e:adj}
 \xymatrix{ {\Fun}({\Ce}, {\sc Ch(R)}) \ar@/^/[r]^{\sqcup^n} &{\Fun}({\Cen n}, {\sc Ch(R)}) \ar@/^/[l]^{\Delta^*}\ar@/^/[r]^{t^+} & {\Fun}({\Cen n}, {\sc Ch(R)})_{\bf t}\ar@/^/[l]^{U^+}}
\end{equation}
whose components we explain below.  We begin with the pair on the left.

\begin{defn}  \label{d:sqcup}Define a functor 
\[\Delta^*: \Fun({\Cen n}, {\sc Ch(R)}) \to \Fun({\Ce}, {\sc Ch(R)})\]
on $H:\Cen n\rightarrow {\sc Ch(R)}$  by $\Delta^*H(X) = H(X, \ldots,  X)$.  Let 
\[ \sqcup ^n: \Fun({\Ce}, {\sc Ch(R)}) \to \Fun({\Cen n}, {\sc Ch(R)}) \]
be the functor defined by precomposition with the functor $\sqcup_n$ of Definition \ref{d:cross}.  That is, for a functor $F$  
\[ \sqcup ^n(F)(X_1, X_2, \dots, X_n) = F(X_1\amalg_A  \cdots \amalg_A X_n) .\] 
\end{defn}
The next result is Proposition 3.16 of \cite{BJM}.   The proof can be found there.  
\begin{prop} The functors $\Delta^*$ and $\sqcup ^n$ are an adjoint pair of functors, with $\Delta^*$ being the left adjoint.

\end{prop}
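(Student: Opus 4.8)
The plan is to exhibit the unit and counit of the claimed adjunction explicitly and verify the triangle identities, since the functors $\Delta^*$ and $\sqcup^n$ are defined by honest precomposition and do not involve any homotopical constructions. First I would observe that the diagonal functor $\Delta : \Ce \to \Cen n$, sending $X$ to $(X, \dots, X)$, is \emph{not} what induces $\Delta^*$; rather, $\Delta^*$ is precomposition with $\Delta$ and $\sqcup^n$ is precomposition with $\sqcup_n : \Cen n \to \Ce$. So the adjunction $(\Delta^* \dashv \sqcup^n)$ at the level of functor categories is induced by a relationship between the functors $\Delta$ and $\sqcup_n$ on the underlying (factorization) categories. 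The key structural fact is that $\sqcup_n$ is \emph{left adjoint} to $\Delta$ as functors $\Ce \rightleftarrows \Cen n$: this is just the universal property of the coproduct over $A$ in $\Ce$, together with the fact that a map $(X_1, \dots, X_n) \to (X, \dots, X)$ in $\Cen n$ is the same as a map $X_1 \amalg_A \cdots \amalg_A X_n \to X$ in $\Ce$ (here one must check this bijection is compatible with the structure maps from $A$ and to $B$, which is immediate since all the relevant maps to $B$ are determined by $\eta$).

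Given that $\sqcup_n \dashv \Delta$ on the underlying categories, precomposition reverses the direction of adjunction: if $L \dashv R$ then $(-)\circ R \dashv (-) \circ L$ on functor categories (this is a standard formal consequence — one checks $\Hom(G \circ R, H) \cong \Hom(G, H \circ L)$ via the unit/counit of $L \dashv R$). Applying this with $L = \sqcup_n$, $R = \Delta$, and target category $\sc Ch(R)$, we get exactly $(-)\circ \Delta \dashv (-) \circ \sqcup_n$, i.e.\ $\Delta^* \dashv \sqcup^n$, with $\Delta^*$ the left adjoint, as asserted. Concretely, the unit $\eta_F : F \Rightarrow \sqcup^n \Delta^* F$ at an object $X_1, \dots, X_n$ is $F$ applied to the canonical map $X_i \to X_1 \amalg_A \cdots \amalg_A X_n$ followed by... actually it is $F$ applied to the composite $X_1 \amalg_A \cdots \amalg_A X_n \to \Delta(\sqcup_n(X_1,\dots,X_n))$ evaluated appropriately; and the counit $\varepsilon_H : \Delta^* \sqcup^n H \Rightarrow H$ at $X$ is $H$ applied to the fold map $X \amalg_A \cdots \amalg_A X \to X$.

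The main obstacle — though it is a mild one — is bookkeeping in the factorization category $\Ce$: one must be careful that the coproduct $X_1 \amalg_A \cdots \amalg_A X_n$ taken in $\Ce$ really is the coproduct over $A$ computed in $\sc C$, equipped with the induced map to $B$ (which exists and is unique because each $X_i \to B$ restricts to $\eta$ on $A$), and that all the natural transformations in sight respect these structure maps. Once that compatibility is in hand, the triangle identities are formal and follow from the triangle identities for $\sqcup_n \dashv \Delta$ in $\sc C$. Since the author explicitly defers to \cite{BJM}, Proposition 3.16, I would keep the writeup short: state the underlying adjunction $\sqcup_n \dashv \Delta$, invoke the formal ``precomposition reverses adjunctions'' principle, and note that the factorization-category structure maps cause no trouble because they are all determined by $\eta$.
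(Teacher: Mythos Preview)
Your argument is correct: the underlying adjunction $\sqcup_n \dashv \Delta$ on $\Ce$ (coproduct is left adjoint to diagonal, and the coproduct in $\Ce$ is precisely the coproduct over $A$ in $\C$) combined with the formal fact that precomposition reverses adjunctions gives exactly $\Delta^* \dashv \sqcup^n$. The paper itself does not supply a proof but simply cites Proposition~3.16 of \cite{BJM}, so there is no in-paper argument to compare against; your approach is the standard one and almost certainly what \cite{BJM} does as well. Your parenthetical description of the unit got momentarily tangled (the unit at $G\in\Fun(\Cen n,{\sc Ch(R)})$ is $G$ applied to the canonical inclusions $(X_1,\dots,X_n)\to\Delta(\sqcup_n(X_1,\dots,X_n))$), but this does not affect the validity of the main line, which rests only on $\sqcup_n\dashv\Delta$ and the precomposition principle.
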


The adjoint pair on the right side of (\ref{e:adj}) involves the functor ${t}$ of Definition \ref{d:cross}.   We establish in Section 4 that there are natural transformations $\xi: t\rightarrow tt$ and $\gamma: t\rightarrow {\rm id}$ that make  $(t,\xi, \gamma)$ a cotriple for functors  from ${\Cen n}$ to ${\sc Ch(R)}$.

Categories equipped with a cotriple $T$ have an associated category of $T$-coalgebras, related to the original category by a  forgetful-cofree adjunction (see \cite{Mac}, \S VI).
For our cotriple $t$, the category of ${t}$-coalgebras, ${\Fun}({\Cen n}, {\sc Ch(R)})_{\bf t}$, consists of functors $G:{\Cen n}\to {\sc Ch(R)}$ that are equipped with a section $\beta:G\to {t}G$ of the natural transformation 
$\gamma_G:{t}G\to G$, that make the diagram
\[ \xymatrix{ G \ar[r]^{\beta} \ar[d]_{\beta} & {t}G\ar[d]^{{t}\beta} \\ {t}G\ar[r]_{\xi_G} & {tt}G}\]
commute.  For example, for any functor $G\in{\Fun}({\Cen n},{\sc Ch(R)})$, there is an associated ${t}$-coalgebra $({t}G, \xi_G)$.     Let $t^+: {\Fun}({\Cen n}, {\sc Ch(R)})\to  {\Fun}({\Cen n}, {\sc Ch(R)})_{\bf t}$ be the free coalgebra functor, which is defined on objects by $t^+(G)=({t}G, \xi_G)$.  

\begin{thm}  The functors
\[  \xymatrix{ {\Fun}({\Cen n}, {\sc Ch(R)}) \ar@/^/[r]^{t^+} & {\Fun}({\Cen n}, {\sc Ch(R)})_{\bf t}\ar@/^/[l]^{U^+}}\]
are an adjoint pair of functors, with the forgetful functor $U^+$ being the left adjoint.
\end{thm}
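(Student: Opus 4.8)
The plan is to recognize this statement as the comonad analogue of the Eilenberg--Moore adjunction: once $(t,\xi,\gamma)$ is known to be a cotriple on $\mathcal{E}:=\Fun(\Cen n,{\sc Ch(R)})$ --- which is established in Section \ref{s:cotriple} --- the adjunction $U^+\dashv t^+$ is a purely formal consequence, dual to the standard fact (\cite{Mac}, \S VI) that the forgetful functor from the category of algebras over a monad has a left adjoint given by the free-algebra functor. Accordingly, I would spend almost no effort on the theorem itself and simply exhibit the unit and counit of the adjunction and check the two triangle identities.

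First I would record the relevant data. A $t$-coalgebra is a pair $(G,\beta)$ with $\beta\colon G\to tG$ satisfying $\gamma_G\circ\beta=\id_G$ and $\xi_G\circ\beta=t\beta\circ\beta$, and a morphism $(G,\beta)\to(G',\beta')$ in $\mathcal{E}_{\bf t}:=\Fun(\Cen n,{\sc Ch(R)})_{\bf t}$ is a natural transformation $h\colon G\to G'$ with $\beta'\circ h=th\circ\beta$. The cofree functor $t^+$ sends $H$ to $(tH,\xi_H)$ and $f\colon H\to H'$ to $tf$; this lands in $\mathcal{E}_{\bf t}$ because $\xi_{H'}\circ tf=ttf\circ\xi_H$ by naturality of $\xi$, and $(tH,\xi_H)$ satisfies the coalgebra axioms precisely because these are the counit and coassociativity identities of the cotriple. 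The functor $U^+$ forgets the structure map.

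Next I would define the counit $\varepsilon\colon U^+t^+\Rightarrow\id_{\mathcal{E}}$ by $\varepsilon_H=\gamma_H\colon tH\to H$, with naturality supplied by naturality of $\gamma$, and the unit $\rho\colon\id_{\mathcal{E}_{\bf t}}\Rightarrow t^+U^+$ by $\rho_{(G,\beta)}=\beta\colon(G,\beta)\to(tG,\xi_G)$, where the fact that $\beta$ is a coalgebra morphism is exactly the coassociativity axiom $\xi_G\circ\beta=t\beta\circ\beta$ and naturality in $(G,\beta)$ is the morphism condition for $h$. Then the first triangle identity, evaluated at a coalgebra $(G,\beta)$, is the composite $G\xrightarrow{\beta}tG\xrightarrow{\gamma_G}G$, which equals $\id_G$ by the counit coalgebra axiom; the second triangle identity, evaluated at $H$, is the composite $(tH,\xi_H)\xrightarrow{\xi_H}(ttH,\xi_{tH})\xrightarrow{t\gamma_H}(tH,\xi_H)$, whose underlying map $t\gamma_H\circ\xi_H=\id_{tH}$ is one of the two counit identities of the cotriple. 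Equivalently, one may display the bijection $\mathcal{E}_{\bf t}\bigl((G,\beta),(tH,\xi_H)\bigr)\cong\mathcal{E}(G,H)$ directly, sending a coalgebra map $g\colon G\to tH$ to $\gamma_H\circ g$ and a map $f\colon G\to H$ to $tf\circ\beta$, and verify these are mutually inverse and natural using the same axioms.

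The upshot is that there is no real obstacle internal to this proof --- every step is a one-line diagram chase with the cotriple and coalgebra identities --- so I would keep the write-up brief and point to \cite{Mac}, \S VI for the dual statement. The genuine difficulty, namely proving that $\xi$ and $\gamma$ can be constructed so that $(t,\xi,\gamma)$ is an honest cotriple, with the (co)associativity and counit identities holding as equalities of natural transformations rather than merely weak equivalences, is exactly the business of Section \ref{s:cotriple}, where the explicit model for iterated fibers from Section \ref{s:ifibers} does the work.
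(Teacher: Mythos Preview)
Your proposal is correct. The paper does not actually supply a proof of this theorem: immediately after the statement it simply writes ``This is Theorem 3.14 of \cite{BJM},'' deferring entirely to that reference. What you have written is the standard Eilenberg--Moore argument dualized to comonads, which is presumably what appears in \cite{BJM} (and in \cite{Mac}, \S VI); your unit, counit, and triangle-identity verifications are all correct. In short, you have filled in what the paper chose to outsource, and your emphasis that the real content lies in Section~\ref{s:cotriple} matches the paper's own framing exactly.
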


This is Theorem 3.14 of \cite{BJM}.  

Hence, from (\ref{e:adj}) one obtains another adjoint pair by composition:
\begin{equation}
 \xymatrix{ {\Fun}({\Ce}, {\sc Ch(R)}) \ar@/^/[rr]^{t^+\circ \sqcup^n} &&{\Fun}({\Cen n}, {\sc Ch(R)})_{\bf t}\ar@/^/[ll]^{\Delta^* \circ U^+}.}
\end{equation}

 When evaluated on a functor $H$, the composition of the  top  arrow with $U^+$ gives $U^+\circ t^+\circ \sqcup^n(H)= cr_nH$ as in Definition 1.1.  Thus, $\perp _n$ is the composition of the  left adjoint $\Delta ^*\circ U^+$ with the right adjoint
${t}^+\circ\sqcup ^n$.  As a composition of adjoints, $\perp_n$ is part of a cotriple (see \cite{We}, Appendix A.6).   The coproduct functor $\sqcup^n(F)$ is not a homotopy functor, even if $F$ is a homotopy functor, unless it is evaluated on cofibrant objects $(X_1, \ldots , X_n)$.   In order for $\perp_nH$ to be homotopy invariant, we precompose with a cofibrant replacement functor.  Henceforth,  $\perp_nH(X)$ means $\perp_nH(X^{cof})$ where $X^{cof}$ is a functorial cofibrant replacement of $X$.

In particular, the counit for the adjunction produced by the pair $(\Delta ^*\circ U^+, {t}^+\circ\sqcup _n)$
yields a natural transformation $ \epsilon:\perp _n\rightarrow {\rm id}$.   And, a natural 
transformation $\delta : \perp _n\rightarrow \perp _n\perp _n$ is defined by $\Delta ^*\circ U^+(\iota 
_{{t}^+\circ\sqcup ^n})$ where $\iota$ is a unit for the adjunction.   This gives us the following.

\begin{thm} The functor and natural transformations $$(\perp_n,
\delta : \perp _n\rightarrow \perp _n\perp _n,  \epsilon:\perp _n\rightarrow {\rm id})$$  form a cotriple on the category of functors $\Fun(\Ce,{\sc Ch(R)})$.
\end{thm}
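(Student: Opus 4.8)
The plan is to deduce the statement formally from the adjunction chain (\ref{e:adj}), exactly as in \cite{BJM}. First I would record the two adjunctions already in hand: $\Delta^*\dashv\sqcup^n$ (the Proposition quoted above, i.e.\ Proposition 3.16 of \cite{BJM}) and $U^+\dashv t^+$ (the Theorem quoted above, i.e.\ Theorem 3.14 of \cite{BJM}), the latter being available precisely because $(t,\xi,\gamma)$ is proved to be a cotriple in Section \ref{s:cotriple}. Composing left adjoints with left adjoints and right adjoints with right adjoints produces an adjunction $\Delta^*\circ U^+\dashv t^+\circ\sqcup^n$ between $\Fun(\Cen n,{\sc Ch(R)})_{\bf t}$ and $\Fun(\Ce,{\sc Ch(R)})$, with unit $\iota$ and counit $\epsilon$ (the latter being the $\epsilon$ of the statement). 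Unwinding definitions, $U^+\circ t^+\circ\sqcup^n(H)=t(H\circ\sqcup_n)=cr_nH$, and hence $(\Delta^*\circ U^+)\circ(t^+\circ\sqcup^n)(H)(X)=cr_nH(X,\dots,X)=\perp_nH(X)$, so this composite endofunctor on $\Fun(\Ce,{\sc Ch(R)})$ is exactly $\perp_n$.

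I would then appeal to the standard fact (\cite{Mac}, \S VI; \cite{We}, Appendix A.6) that any adjunction $L\dashv R$, with unit $\iota$ and counit $\epsilon$, gives rise to a cotriple $(LR,\ L\iota R,\ \epsilon)$ on the domain of $R$, the counit law and coassociativity being formal consequences of the triangle identities for $\iota$ and $\epsilon$. Taking $L=\Delta^*\circ U^+$ and $R=t^+\circ\sqcup^n$ yields precisely the data in the statement: the comultiplication $L\iota R=\Delta^*\circ U^+(\iota_{t^+\circ\sqcup^n})$ is the map $\delta$ defined above, and the counit is $\epsilon$. Thus no further computation is required beyond noting that the $\delta$ and $\epsilon$ of the statement are, by construction, these canonical maps.

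The part that actually carries content here is the hypothesis rather than the theorem: the adjunction $U^+\dashv t^+$ requires $(t,\xi,\gamma)$ to be a cotriple \emph{strictly}, not merely up to weak equivalence, and supplying this is the work of Section \ref{s:cotriple}, carried out via the explicit model for iterated fibers of Definition \ref{d:ifiber}. Two lesser points call for attention. First, the variance bookkeeping: one must keep $\Delta^*$ and $U^+$ as the \emph{left} adjoints, so that the resulting cotriple lives on $\Fun(\Ce,{\sc Ch(R)})$ rather than on the category of $t$-coalgebras, and so that the unit inserted in the middle, $L\iota R$, is what produces $\delta$. Second, since from now on $\perp_nH$ means the cofibrant-replacement version $\perp_nH((-)^{cof})$, one should check that $\delta$ and $\epsilon$ descend through precomposition with a functorial cofibrant replacement functor; this is routine and is handled exactly as in \cite{BJM}.
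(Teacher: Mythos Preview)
Your proposal is correct and matches the paper's own argument essentially line for line: the paper composes the two adjunctions to obtain $\Delta^*\circ U^+\dashv t^+\circ\sqcup^n$, identifies $\perp_n$ as the composite $LR$, and invokes the standard fact (\cite{We}, A.6) that any adjunction yields a cotriple $(LR,\,L\iota R,\,\epsilon)$, with $\delta=\Delta^*\circ U^+(\iota_{t^+\circ\sqcup^n})$ and $\epsilon$ the counit. Your remarks on variance and on the cofibrant-replacement convention are accurate elaborations, but the core argument is identical.
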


Every cotriple yields a simplicial object (see \cite{We}, 8.6.4) whose face and degeneracy maps are induced by the counit and comultiplication of the cotriple.  Let $\perp_n^{*+1} F(X)$ denote the simplicial chain complex arising from the cotriple $\perp_n$ in $\Fun(\Ce, {\sc Ch(R)})$.  This is a functor from the simplicial category $\Delta^{op}$ of finite sets and order preserving maps to chain complexes.  As is the convention in \cite{BJM}, we let $|\perp_n^{*+1}F(X)|$ denote the ``fat" realization of this simplicial chain complex.  That is, 
\[ |\perp_n^{*+1} F(X)| :\ = \operatorname{hocolim}_{\Delta^{op}} \perp_n^{*+1} F(X),\]
where we assume that $\operatorname{hocolim}$ is a homotopy invariant functor (see the discussion in \cite{Hirschhorn}, Chapter 19).  When $\perp_n^{*+1}F(X)$ is cofibrant, the fat realization is weakly equivalent to the usual geometric realization.

\subsection {The degree $n$ approximation of a functor}

In this section we will approximate homotopy functors $F:\Ce\to {\sc Ch(R)}$ by functors that satisfy a kind of higher additivity condition called the {\it degree} of the functor.

\begin{defn}\label{d:degree} A functor $F:\Ce\to {\sc Ch(R)}$ is degree $n$ if \[cr_{n+1}F(X_1, \ldots, X_{n+1})\simeq 0\] for all $(n+1)$-tuples $(X_1, \ldots, X_{n+1})$.  
\end{defn}

In order to approximate $F:\Ce\to {\sc Ch(R)}$ by a degree $n$ functor, we would like to eliminate the failure of $F$ to be degree $n$.  As this information is contained in the (iterated) cross-effects, we make the following definition.

\begin{defn}(Definition 5.3, \cite{BJM})  The $n$th term in the cotriple Taylor tower of $F$  is the functor
\[ \Gamma_nF :\ = \operatorname{hocofiber} \left( | \perp^{*+1}_{n+1} F| \stackrel{\hat{\epsilon}}{\longrightarrow} F\right),\]
where $\hat{\epsilon}$ is the composition 
of the natural simplicial map from $\perp_{n+1}^{*+1}$ to the simplicial object $id^{*+1}$  induced by the counit $\epsilon$ of the cotriple $\perp_{n+1}$, with the weak equivalence $|{\rm id}^{*+1}F| \stackrel{\simeq}{\longrightarrow} F$. 
\end{defn}

We let $p_nF:F\to \Gamma_nF$ denote the natural transformation in the resulting cofibration sequence
\[ |\perp_{n+1}^{*+1}F| \to F \to \Gamma_nF.\]

\begin{thm} For a given functor $F:\Ce\to {\sc Ch(R)}$, the functor $\Gamma_nF$ is degree $n$.
\end{thm}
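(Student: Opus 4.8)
The plan is to show that $\Gamma_n F$ is degree $n$ by analyzing the cofiber sequence $|\perp_{n+1}^{*+1}F| \to F \to \Gamma_n F$ through the lens of the $(n+1)$st cross effect. Since $cr_{n+1}$ is built out of iterated homotopy fibers, and $\mathrm{Ch}(R)$ is stable so that homotopy fibers and homotopy cofibers commute with each other up to shift, the functor $cr_{n+1}$ is exact in the sense that it takes cofiber sequences of functors to cofiber sequences. Applying $cr_{n+1}$ to the sequence above, it therefore suffices to show that $cr_{n+1}|\perp_{n+1}^{*+1}F| \to cr_{n+1}F$ is a weak equivalence; equivalently, that the cofiber, which is $cr_{n+1}\Gamma_n F$ up to the relevant shift, vanishes.

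First I would observe that $cr_{n+1}$ commutes with the fat realization $|\perp_{n+1}^{*+1}F|=\operatorname{hocolim}_{\Delta^{op}}\perp_{n+1}^{*+1}F$. This requires that iterated homotopy fibers — hence finite homotopy limits in the stable category $\mathrm{Ch}(R)$ — commute with the relevant homotopy colimit; in a stable model category finite homotopy limits commute with all homotopy colimits, so $cr_{n+1}|\perp_{n+1}^{*+1}F| \simeq |\,cr_{n+1}\!\circ\!\perp_{n+1}^{*+1}F\,|$. Then I would identify $cr_{n+1}\circ \perp_{n+1}^{*+1}F$ levelwise: since $\perp_{n+1}H(X)=cr_{n+1}H(X,\dots,X)$ and $cr_{n+1}$ is computed via $t\circ \sqcup^{n+1}$, the key input is that for any $H$ the natural map $cr_{n+1}(\perp_{n+1}H)\to cr_{n+1}H$ admits a section, coming from the comultiplication $\delta$ of the cotriple $\perp_{n+1}$ together with the counit. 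Concretely, the simplicial object $cr_{n+1}\perp_{n+1}^{*+1}F$ in the $*$-direction is an ``extra-degenerate'' (i.e.\ contractible) augmented simplicial object with augmentation $cr_{n+1}F$: the extra degeneracy is supplied precisely by the cotriple structure on $\perp_{n+1}$ (this is the standard fact that $T^{*+1}$ augmented over $TX$ is split for a cotriple $T$, here after applying the exact functor $cr_{n+1}$ which carries the splitting along). Hence its realization is weakly equivalent to $cr_{n+1}F$, and the map to $cr_{n+1}F$ is a weak equivalence.

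Granting that, the cofiber $cr_{n+1}\Gamma_n F$ of a weak equivalence is weakly contractible, so $cr_{n+1}\Gamma_n F(X_1,\dots,X_{n+1})\simeq 0$ for all tuples of cofibrant objects, which is the definition of $\Gamma_n F$ being degree $n$ (recall $\perp$, hence $cr$, is only homotopy invariant after cofibrant replacement, which is built into the definitions here). The main obstacle I anticipate is the commutation of $cr_{n+1}$ with the fat realization: one must be careful that the iterated homotopy fiber model of Definition~\ref{d:ifiber} genuinely commutes with $\operatorname{hocolim}_{\Delta^{op}}$, not merely that abstract finite limits commute with this colimit — but since $\mathrm{Ch}(R)$ is stable and the iterated fiber is a finite homotopy limit, and $\operatorname{hocolim}$ is assumed homotopy invariant, this goes through; the bookkeeping of the simplicial directions versus the $(n+1)$ cross-effect directions is the only genuinely fiddly point. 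A secondary subtlety is verifying that the extra degeneracy produced from the $\perp_{n+1}$-cotriple structure is compatible, after applying $cr_{n+1}$, with the simplicial identities, but this is formal once one knows $cr_{n+1}$ is exact. This argument is essentially the chain-complex analogue of the corresponding proof in \cite{BJM}, and I would follow that blueprint, substituting our explicit iterated-fiber model where homotopy limits are needed.
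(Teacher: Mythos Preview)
Your proposal is correct and follows essentially the same route as the paper. The paper defers to Proposition~5.4 of \cite{BJM}, noting that the argument applies a general fact about adjoint pairs (Lemma~5.5 of \cite{BJM}) to $(\Delta^*\circ U^+,\, t^+\circ \sqcup^{n+1})$ together with the commutation of finite homotopy limits with the relevant homotopy colimits; your extra-degeneracy argument for the augmented simplicial object $cr_{n+1}\perp_{n+1}^{*+1}F \to cr_{n+1}F$ is exactly what that adjoint-pair lemma supplies, and your stability-based justification for commuting $cr_{n+1}$ past the fat realization is the same commutation step the paper isolates.
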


The proof is the same as the proof of Proposition 5.4 of \cite{BJM}.   Although it is stated for the target category of spectra, the proof as written will apply equally well in ${\sc Ch(R)}$, as it relies only on formal properties of adjoint pairs, homotopy limits and homotopy colimits.   In particular,  
 the proof involves applying a general fact about adjoint pairs of functors (Lemma 5.5 of \cite{BJM}) to the adjoint pair $(\Delta^*\circ U^+, t^+\circ \sqcup_n)$, and then using the fact that   finite homotopy limits commute with finite and filtered homotopy colimits.

One can also construct natural transformations $q_n: \Gamma_nF\rightarrow \Gamma_{n-1}F$ 
as in \cite{BJM}, so that the functors $\Gamma_nF$ assemble to form a Taylor tower for $F$.  

%

\section{Iterated fibers in ${\sc Ch(R)}$}\label{s:ifibers}
Let $R$ and ${\sc Ch(R)}$ be as defined in Section 2.  In this section, we describe our models for homotopy fibers and iterated homotopy fibers of chain complexes over $R$.  
\subsection{Homotopy fibers in ${\sc Ch(R)}$}

\begin{defn}\label{d:fiber}
Let $f:U\rightarrow V$ be a map of chain complexes in ${\sc Ch(R)}$.  Let $\fib(f)$ be the chain complex with $\fib(f)_n=U_n\oplus V_{n+1}$ and $$d(u,v)=(d(u), -f(u)-d(v)).$$
\end{defn}

This construction will serve as our model for the homotopy fiber of $f$ in ${\sc Ch(R)}$.  
\begin{prop}\label{p:fiber}
Let $f:U\rightarrow V$ be a map of chain complexes in ${\sc Ch(R)}$.   Then $\fib(f)$ is weakly equivalent to the homotopy fiber of $f$.
\end{prop}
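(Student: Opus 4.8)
The plan is to identify $\fib(f)$ with the homotopy fiber obtained from the textbook recipe — replace $f$ by a fibration and take the strict pullback along $0\to V$ — and in fact to exhibit an honest isomorphism of chain complexes between the two, which of course yields the asserted weak equivalence.

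First I would construct an explicit fibrant replacement of $f$ by means of a path object. Let $V^I$ be the chain complex with $(V^I)_n=V_n\oplus V_n\oplus V_{n+1}$ and differential $d(a,b,c)=(d a,\,d b,\,d c-(-1)^n(b-a))$; this is $\underline{\Hom}(I,V)$ for $I$ the normalized chains on $\Delta^1$, with $I_0=R\langle v_0\rangle\oplus R\langle v_1\rangle$, $I_1=R\langle e\rangle$, $\partial e=v_1-v_0$. The two endpoint evaluations $\mathrm{ev}_0,\mathrm{ev}_1\colon V^I\to V$ (projection to the first, resp.\ second, factor) are dimensionwise split surjections, and the deformation retraction of $I$ onto $R$ (in degree $0$) shows that $V\to V^I$, and hence each $\mathrm{ev}_i$, is a quasi-isomorphism; thus $\mathrm{ev}_0$ is an acyclic fibration in ${\sc Ch(R)}$. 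Now set $\Gamma_f:=U\times_{V,f,\mathrm{ev}_0}V^I$, so that $(\Gamma_f)_n=U_n\oplus V_n\oplus V_{n+1}$. The projection $\Gamma_f\to U$ is the pullback of the acyclic fibration $\mathrm{ev}_0$ along $f$, hence is itself an acyclic fibration, so its section $u\mapsto(u,f(u),0)$ (the constant path at $f(u)$) is a weak equivalence $U\xrightarrow{\ \sim\ }\Gamma_f$; and $p\colon\Gamma_f\to V$, $(u,b,c)\mapsto b$, is dimensionwise surjective, hence a fibration. By construction $p$ composed with the constant-path section recovers $f$, so this is a factorization of $f$ as a weak equivalence followed by a fibration.

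Next I would reduce the homotopy limit of $U\xrightarrow{f}V\leftarrow 0$ to a strict kernel. The map of cospans $(U\xrightarrow{f}V\leftarrow 0)\to(\Gamma_f\xrightarrow{p}V\leftarrow 0)$ is a dimensionwise weak equivalence, so — using that ${\sc Ch(R)}$ is right proper, as recorded in Section~\ref{s:recap} — the two cospans have the same homotopy pullback; and since $p$ is a fibration and every object of ${\sc Ch(R)}$ is fibrant, that homotopy pullback is computed by the strict pullback $\Gamma_f\times_V 0=\ker p$. A direct computation gives $(\ker p)_n=U_n\oplus V_{n+1}$ with differential $d(u,c)=(d(u),\,d(c)+(-1)^n f(u))$ (for the sign conventions above). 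Finally, the dimensionwise sign twist $(u,c)\mapsto(u,(-1)^n c)$ is an isomorphism of chain complexes from $\ker p$ onto the complex $\fib(f)$ of Definition~\ref{d:fiber}. Hence $\fib(f)\cong\ker p$ is a model for the homotopy fiber of $f$, which is the claim.

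The argument is essentially bookkeeping, and I would not expect any real obstacle; the only delicate points are keeping track of signs in the path object and in the twisting isomorphism, and making sure the reduction ``homotopy pullback along a fibration equals strict pullback'' is applied to a cospan genuinely weakly equivalent to $U\to V\leftarrow 0$ — both of which are covered by the model-categorical facts already cited. As a consistency check one may note that $\fib(f)$ sits in a short exact sequence $0\to\Sigma^{-1}V\to\fib(f)\to U\to 0$ whose long exact homology sequence is the expected $\cdots\to H_{n+1}(V)\to H_n(\fib(f))\to H_n(U)\xrightarrow{f_*}H_n(V)\to\cdots$.
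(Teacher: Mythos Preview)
Your proof is correct and follows essentially the same strategy as the paper: factor $f$ through the mapping path space as a weak equivalence followed by a fibration, then identify the strict kernel of that fibration with $\fib(f)$ and invoke the standard fact that the pullback along a fibration computes the homotopy pullback. The paper defines its mapping path space $P(f)$ directly (with $P(f)_n=U_n\oplus V_{n+1}\oplus V_n$ and $\beta(u,v,v')=v'$) so that $\ker\beta=\fib(f)$ on the nose; your route via the explicit path object $V^I=\underline{\Hom}(I,V)$ is the same construction in slightly different coordinates, which is why you need the sign twist $(u,c)\mapsto(u,(-1)^n c)$ at the end.
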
  

\begin {proof}
We first replace $f$ with a fibration.  
Let $P(f)$ be the chain complex with $P(f)_n=U_n\oplus V_{n+1}\oplus V_n$ and $$d(u,v,v')=(d(u),-f(u)-d(v)+v', d(v')).$$
We define $\beta: P(f)\rightarrow V$ by $\beta(u,v,v')=v'$, and  $\alpha: U\rightarrow P(f)$ by $\alpha(u)=(u,0, f(u))$.
One  can easily see  that $\alpha$ and $\beta$ are chain maps and that $\beta$ is a fibration.  In addition, it is straightforward to prove that $\alpha$ is a weak equivalence.  Clearly, $\beta\circ\alpha=f$.   Moreover, the kernel of $\beta$ is $\fib(f)$.  As a result, $\fib(f)$ is the pullback of the diagram $0\rightarrow V\leftarrow P(f)$.  
By 13.3.7 and 13.4.4 of \cite{Hirschhorn}, this pullback is weakly equivalent to the homotopy fiber of $f.$
\end{proof}

\subsection{Iterated Fibers of $n$-cubes}   We use the model for homotopy fiber defined above to define the iterated homotopy fiber of an $n$-cubical diagram of objects in ${\sc Ch(R)}$.   

\begin{defn}\label{d:ifiber}
Let ${\mathcal X}:{\mathcal P}({\bf n})\rightarrow {\sc Ch(R)}$ be an $n$-cubical diagram of objects in ${\sc Ch(R)}$.  The iterated homotopy fiber of ${\mathcal X}$, denoted $\ifiber{\mathcal X}$, is the object in ${\sc Ch(R)}$ obtained by first taking the homotopy fibers of all maps of the form ${\mathcal X}(U\subsetneq U\cup\{1\})$, $U\subseteq \{2, \dots, n\}$ in $\mathcal X$ to obtain an $(n-1)$-cube $\widetilde{\mathcal{X}}$, then taking the homotopy fibers of all maps of the form  
${\widetilde{\mathcal{X}}}(W\subsetneq W\cup\{2\})$, $W\subseteq\{3, \dots, n\}$, in the resulting $(n-1)$-cube of homotopy fibers, and continuing in this fashion until we have taken homotopy  fibers in all $n$ independent directions from the original $n$-cube.  
\end{defn}
We illustrate this definition with an example in the case $n=2$.
\begin{ex}
For a square diagram $\sc X : \P({\bf 2}) \to {\sc Ch(R)}$
\[
 \xymatrix{\sc X (\emptyset) = A \ar[r]^f\ar[d]_{\al}&B = \sc X (\{1\}) \ar[d]^{\be}\\
\sc X (\{2\}) = C\ar[r]_g&D = \sc X (\{1,2\}),}
\]
the first step in the construction of the iterated homotopy  fiber of $\sc X$ involves taking the homotopy fibers of the maps $f$ and $g$.   This yields two chain complexes, $\fib(f)$ and  $\fib(g)$ with 
$$
\fib(f)_k=A_k\oplus B_{k+1}, \\ \  d(a,b)=(d_A(a),\  -f(a)-d_{B}(b)),
$$
and 
$$
\fib(g)_k=C_k\oplus D_{k+1}, \\ \  d(c,d)=(d_C(c),\  -g(c)-d_D(d)).$$
The maps $\{\alpha_{k}\oplus \beta_{k+1}\}$ form a chain map from $\fib(f)$ to $\fib(g)$.  The second (and final) step in constructing the iterated homotopy fiber of $\sc X$ is to determine the homotopy fiber of this chain map.   That is, 
\[
\ifiber({\sc X})=\fib\left (\vcenter{\xymatrix{\fib(f)\ar[d]_{\alpha\oplus \beta}\\\fib(g)}  } \right).
\]
Applying the definition of homotopy fiber, we see that 
\[
\ifiber({\sc X})_k=A_k\oplus B_{k+1}\oplus C_{k+1}\oplus D_{k+2}\]
with \[ d_{\rm{ifib}}(a,b,c,d)=(d_A(a),\  -f(a)-d_B(b),\  -\alpha(a)-d_C(c),\  -\beta(b)+g(c)+d_D(d)).
\]

\end{ex}

More generally, we can use induction to describe the iterated homotopy fiber of an $n$-cube.

\begin{lem}  \label{l:ifiber} For $T\subseteq {\bf n}$, $i\notin T$, let $\sigma_i^T$ be the inclusion $T\rightarrow T\cup \{i\}$.  
Let ${\sc X}$ be an $n$-cube in ${\sc Ch(R)}$.  Then in degree $k$, the iterated homotopy fiber of ${\sc X}$ is the $R$-module
\[
\ifiber({\sc X})_{k} =\bigoplus_{T\subseteq {\bf n}}{\sc X}(T)_{k+|T|}.
\]
The differential will send the term $x$ indexed by $T\subseteq {\bf n}$ to the sum of terms
\[
(-1)^{|T|}d_{{\sc X}(T)}(x)+\sum_{i\notin T}(-1)^{{\mathrm {sgn}}(\sigma_i^T)+1}{\sc X}(\sigma_i^T)(x),
\]
where \[{\mathrm {sgn}}(\sigma_i^T)=|\{s\in T\ |\ s>i\}|.\]
\end{lem}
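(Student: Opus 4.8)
The plan is to prove the formula by induction on $n$, unwinding Definition~\ref{d:ifiber} one direction at a time and bookkeeping signs carefully. For $n=1$ the claim is exactly Definition~\ref{d:fiber}: for $f\colon {\sc X}(\emptyset)\to{\sc X}(\{1\})$ we have $\fib(f)_k = {\sc X}(\emptyset)_k\oplus {\sc X}(\{1\})_{k+1}$, matching $\bigoplus_{T\subseteq\{1\}}{\sc X}(T)_{k+|T|}$, and the differential $d(x_\emptyset,x_{\{1\}}) = (d(x_\emptyset),\, -f(x_\emptyset)-d(x_{\{1\}}))$ is exactly the asserted one, since $\mathrm{sgn}(\sigma_1^\emptyset)=0$ gives the sign $(-1)^{0+1}=-1$ on the ${\sc X}(\sigma_1^\emptyset)$ term and $(-1)^{|\emptyset|}=1$, $(-1)^{|\{1\}|}=-1$ on the internal differentials. (The $n=2$ example in the excerpt is a useful sanity check for the inductive step.)

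For the inductive step, suppose the formula holds for $(n-1)$-cubes. Given an $n$-cube ${\sc X}$, the first operation in Definition~\ref{d:ifiber} forms, for each $U\subseteq\{2,\dots,n\}$, the complex $\widetilde{\mathcal X}(U) = \fib\big({\sc X}(U)\xrightarrow{f_U}{\sc X}(U\cup\{1\})\big)$, so $\widetilde{\mathcal X}(U)_k = {\sc X}(U)_k\oplus{\sc X}(U\cup\{1\})_{k+1}$. Indexing subsets $T\subseteq{\bf n}$ by whether $1\in T$, we identify $\bigoplus_{U\subseteq\{2,\dots,n\}}\widetilde{\mathcal X}(U)_{k+|U|}$ term-by-term with $\bigoplus_{T\subseteq{\bf n}}{\sc X}(T)_{k+|T|}$: the summand ${\sc X}(U)_{k+|U|}$ corresponds to $T=U$ (with $1\notin T$, $|T|=|U|$), and ${\sc X}(U\cup\{1\})_{k+|U|+1}$ corresponds to $T=U\cup\{1\}$ (with $|T|=|U|+1$). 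So the underlying graded module of $\ifiber({\sc X})$ is correct by the inductive hypothesis applied to $\widetilde{\mathcal X}$, viewed as an $(n-1)$-cube on $\{2,\dots,n\}$; the only real work is matching differentials.

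The differential of $\ifiber({\sc X})$ is, by the inductive hypothesis for $\widetilde{\mathcal X}$, built from (a) the internal differentials $d_{\widetilde{\mathcal X}(U)}$ with sign $(-1)^{|U|}$ and (b) the maps $\widetilde{\mathcal X}(\sigma_i^U)$ for $i\in\{2,\dots,n\}\setminus U$ with sign $(-1)^{\mathrm{sgn}(\sigma_i^U)+1}$, where $\mathrm{sgn}$ counts elements of $U$ exceeding $i$. Now $d_{\widetilde{\mathcal X}(U)}$ on $(x_U,x_{U\cup\{1\}})$ equals $\big(d_{{\sc X}(U)}(x_U),\, -f_U(x_U)-d_{{\sc X}(U\cup\{1\})}(x_{U\cup\{1\})}\big)$; multiplying by $(-1)^{|U|}$ and reading off components, one gets: on the $T=U$ slot the internal differential with sign $(-1)^{|U|}=(-1)^{|T|}$ (good); on the $T=U\cup\{1\}$ slot the internal differential with sign $(-1)^{|U|+1}=(-1)^{|T|}$ (good); and a contribution $(-1)^{|U|+1}f_U$ from the $T=U$ slot into the $T=U\cup\{1\}$ slot, i.e. the map ${\sc X}(\sigma_1^U)$ with sign $(-1)^{|U|+1}$. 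Since $\sigma_1^U\colon U\to U\cup\{1\}$ has $\mathrm{sgn}(\sigma_1^U)=|\{s\in U: s>1\}|=|U|$ (because $U\subseteq\{2,\dots,n\}$), this sign is $(-1)^{\mathrm{sgn}(\sigma_1^U)+1}$, exactly as claimed. The maps in (b) are the chain maps $\widetilde{\mathcal X}(\sigma_i^U)$, which act diagonally as $\big({\sc X}(\sigma_i^U),\,{\sc X}(\sigma_i^{U\cup\{1\}})\big)$; the sign $(-1)^{\mathrm{sgn}(\sigma_i^U)+1}$ on the first component is the correct $(-1)^{\mathrm{sgn}(\sigma_i^T)+1}$ for $T=U$, and on the second component we need $\mathrm{sgn}(\sigma_i^{U\cup\{1\}}) = \mathrm{sgn}(\sigma_i^U)$, which holds because $1<i$ contributes nothing to the count $|\{s\in U\cup\{1\}: s>i\}|$. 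Thus every term of the asserted differential is produced with the right sign, and conversely every term produced is on the list, completing the induction.

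The main obstacle is purely the sign bookkeeping in the last step: one must verify that the signs $(-1)^{|U|}$ and $(-1)^{\mathrm{sgn}(\sigma_i^U)+1}$ coming from the $(n-1)$-dimensional formula, combined with the intrinsic signs in Definition~\ref{d:fiber}, reassemble into the single uniform rule involving $\mathrm{sgn}(\sigma_i^T)$ for all $T\subseteq{\bf n}$ and all $i\notin T$ — including the newly created direction $i=1$. The key observations that make this work are that $\mathrm{sgn}(\sigma_1^U)=|U|$ when $U\subseteq\{2,\dots,n\}$, and that adjoining the element $1$ to a subset never changes $\mathrm{sgn}(\sigma_i^{-})$ for $i\geq 2$; everything else is a routine comparison of components.
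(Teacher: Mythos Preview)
Your proof is correct and follows essentially the same strategy as the paper's: induction on $n$, peeling off one direction, applying Definition~\ref{d:fiber}, and checking signs. The only cosmetic difference is that the paper peels off the \emph{last} direction $n+1$ (viewing an $(n{+}1)$-cube as a map of $n$-cubes $\mathcal{Y}_1\to\mathcal{Y}_2$ and writing $\ifiber(\mathcal{Y})=\fib(\ifiber(\mathcal{Y}_1)\to\ifiber(\mathcal{Y}_2))$), whereas you peel off the \emph{first} direction $1$, following Definition~\ref{d:ifiber} verbatim. Your sign identities $\mathrm{sgn}(\sigma_1^U)=|U|$ for $U\subseteq\{2,\dots,n\}$ and $\mathrm{sgn}(\sigma_i^{U\cup\{1\}})=\mathrm{sgn}(\sigma_i^U)$ for $i\geq 2$ play the same role as the paper's identities $\mathrm{sgn}(\sigma_{n+1}^T)=0$ and $\mathrm{sgn}(\sigma_i^{T\cup\{n+1\}})=\mathrm{sgn}(\sigma_i^T)+1$; neither route is harder than the other. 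One small point worth making explicit: your inductive hypothesis is literally stated for $(n{-}1)$-cubes on $\{1,\dots,n{-}1\}$, but you apply it to $\widetilde{\mathcal{X}}$ indexed by $\{2,\dots,n\}$---this is harmless since the formula for $\mathrm{sgn}$ depends only on the order, but a sentence acknowledging the relabeling would tighten the argument.
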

\begin{proof}  The proof is by induction.
The cases $n=1$ and $n=2$ have already been established.  Suppose that ${\sc Y}$ is an $(n+1)$-cube.   Let ${\sc Y}_1$ be the $n$-cube obtained by restricting ${\sc Y}$ to subsets of ${\bf n}$, that is for $U\subset {\bf n}$, ${\sc Y}_1(U) = {\sc Y}(U)$.   Let ${\sc Y}_2$ be the $n$-cube defined by  ${\sc Y}_2(U)={\sc Y}(U\cup \{n+1\})$.  Then ${\sc Y}$ is a map of $n$-cubes, ${\sc Y}_1\rightarrow {\sc Y}_2$.  By definition, 
\[
\ifiber({\sc Y})=\fib\left (\ifiber({\sc Y}_1)\rightarrow \ifiber({\sc Y}_2)\right ).
\]
Assuming the result holds for $\ifiber ({\sc Y}_1)$ and $\ifiber({\sc Y}_2)$, we see that 
\begin{align*}
\ifiber({\sc Y})_k&=\ifiber({\sc Y_1})_k\oplus \ifiber({\sc Y}_2)_{k+1}\\
&=\bigoplus_{T\subseteq {\bf n}}[{\sc Y}(T)]_{k+|T|}\oplus \bigoplus _{T\subseteq {\bf n}}[{\sc Y}(T\cup \{n+1\})]_{k+1+|T|}.
\end{align*}

To see that the differential is what we claim, let $(A,B)\in \ifiber({\sc Y}_1)_k\oplus \ifiber({\sc Y}_2)_{k+1}$ and note that the differential for $\fib(\ifiber({\sc Y}_1)\rightarrow \ifiber({\sc Y}_2))$ takes this pair to 
\[
(d(A), -d(B)+\bigoplus _{T\subseteq {\bf n}}(-{\sc Y}(\sigma^T_{n+1})(a_T)))
\] 
where $a_T$ is the term in $A$ indexed by $T$.  
For $T\subseteq {\bf n}$, consider the summands in $A$ and $B$ indexed by $T$ and $T\cup \{n+1\}$, namely ${\sc Y}(T)_{k+|T|}$ and ${\sc Y}(T\cup \{n+1\})_{k+1+|T|}$.  
The differential takes $y\in {\sc Y}(T)_{k+|T|}$ to  the sum 
\begin{align*}
&(-1)^{|T|}d(y)+\left (\sum _{i\notin T, i\in {\bf n}}(-1)^{{\mathrm{sgn}}(\sigma _i^T)+1}{\sc Y}(\sigma _i^T)(y)\right )-{\sc Y}(\sigma ^T_{n+1})(y)\\&=(-1)^{|T|}d(y)+\left (\sum _{i\notin T, i\in {\bf n}}(-1)^{{\mathrm{sgn}}(\sigma _i^T)+1}{\sc Y}(\sigma _i^T)(y)\right )+(-1)^{{\mathrm{sgn}}(\sigma ^T_{n+1})+1}{\sc Y}(\sigma ^T_{n+1})(y)\\
&=(-1)^{|T|}d(y)+\sum _{j\notin T, j\in {\bf n+1}} (-1)^{{\mathrm{sgn}}(\sigma_j^T)+1}{\sc Y}(\sigma ^T_j)(y).\\
\end{align*}
Similarly, the differential takes $y\in {\sc Y}(T\cup \{n+1\})$ to 
\begin{align*}
&-\left((-1)^{|T|}d(y)+\sum_{i\notin T, i\in {\bf n}}(-1)^{{\mathrm {sgn}}(\sigma_i^T)+1}{\sc Y}(\sigma_i^{T\cup \{n+1\}})(y)\right )\\
&=(-1)^{|T\cup \{n+1\}|}d(y)+\sum_{i\notin T\cup \{n+1\}}(-1)^{{\mathrm{sgn}}(\sigma _i^{T\cup \{n+1\}})+1}{\sc Y}(\sigma_i^{T\cup \{n+1\}})(y).\\
\end{align*}
\end{proof}

\subsection{Total fibers}

Before using the preceding results about iterated fibers to prove that $t$ is a cotriple, we conclude this section with some remarks about a related notion, the total fiber, for square diagrams.  We also indicate how these facts about the total fiber can be used to obtain some information about the first term in the Taylor tower of a functor. 

\begin{defn}
For a square diagram ${\bf X}$,
\[\xymatrix{A\ar[r]^f\ar[d]_{\alpha}&B\ar[d]^{\beta}\\
C\ar[r]_g&D,}\]
the total fiber of ${\bf X}$, denoted $\operatorname{tfiber}({\bf X})$, is the homotopy fiber of the map
from $A$ to ${\operatorname{holim}(C\rightarrow D\leftarrow B)}$,  the homotopy pullback of $(C\rightarrow D\leftarrow B)$.  
\end{defn}

\begin {rem}\label{r:tfiber}As was the case with the iterated fiber, we can construct an explicit model for the total fiber using the constructions in Definition \ref{d:fiber} and Proposition \ref{p:fiber}.  In particular, when one replaces $C$ with $P(g)$ (where $P(g)$ is defined as in the proof of Proposition \ref{p:fiber}), then the homotopy pullback of 
\[
\xymatrix{&B\ar[d]^{\beta}\\
C\ar[r]_{g}&D}
\] 
is the pullback of 
\[
\xymatrix{&B\ar[d]^{\beta}\\
P(g)\ar[r]_{\hat g}&D.}
\] 

Applying the model for the homotopy fiber in Definition \ref{d:fiber} to the map from $A$ to the pullback of $(P(g)\rightarrow D\leftarrow B)$ yields an explicit model for the total fiber of the square.  In particular, in degree $k$, the total fiber consists of $5$-tuples $(a,b,c,d,d')$ in $A_k\oplus B_{k+1}\oplus C_{k+1}\oplus D_{k+2}\oplus D_{k+1}$ with $d'=\beta(b)$, and the differential takes such a tuple to 
\[
(d_A(a), -f(a)-d_B(b), -\alpha(a)-d_C(c), g(c)+d_D(d)-\beta(b), -\beta f(a)-d_D(d')).
\]
\end{rem}
It is straightforward to prove the following.

\begin{lem}\label{l:tfib=ifib}
The explicit models of Lemma \ref{l:ifiber} for the iterated fiber and Remark \ref{r:tfiber} for the total fiber yield isomorphic chain complexes. 
\end{lem}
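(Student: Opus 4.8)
The plan is to exhibit an explicit isomorphism of chain complexes between the two models and check it commutes with the differentials. I will work with the $n=2$ case of Lemma~\ref{l:ifiber} and the explicit description of the total fiber in Remark~\ref{r:tfiber}, since both concern the same square diagram ${\bf X}$. From Lemma~\ref{l:ifiber} the iterated fiber in degree $k$ is $A_k\oplus B_{k+1}\oplus C_{k+1}\oplus D_{k+2}$, while the total fiber in degree $k$ consists of $5$-tuples $(a,b,c,d,d')$ in $A_k\oplus B_{k+1}\oplus C_{k+1}\oplus D_{k+2}\oplus D_{k+1}$ subject to the constraint $d'=\beta(b)$. The first observation is that the constraint $d'=\beta(b)$ makes the projection $(a,b,c,d,d')\mapsto(a,b,c,d)$ a bijection from the total fiber onto $A_k\oplus B_{k+1}\oplus C_{k+1}\oplus D_{k+2}$, with inverse $(a,b,c,d)\mapsto(a,b,c,d,\beta(b))$. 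This projection is visibly $R$-linear and degreewise, so it is a candidate isomorphism of graded $R$-modules.

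The main step is then to verify that this projection (equivalently its inverse) is a chain map. On the total fiber side, the differential of $(a,b,c,d,\beta(b))$ is, by Remark~\ref{r:tfiber},
\[
(d_A(a),\ -f(a)-d_B(b),\ -\alpha(a)-d_C(c),\ g(c)+d_D(d)-\beta(b),\ -\beta f(a)-d_D\beta(b)).
\]
Applying the projection drops the last coordinate and yields
\[
(d_A(a),\ -f(a)-d_B(b),\ -\alpha(a)-d_C(c),\ -\beta(b)+g(c)+d_D(d)).
\]
On the other hand, the $n=2$ instance of the differential in Lemma~\ref{l:ifiber} (computed in the $n=2$ Example) is exactly
\[
d_{\mathrm{ifib}}(a,b,c,d)=(d_A(a),\ -f(a)-d_B(b),\ -\alpha(a)-d_C(c),\ -\beta(b)+g(c)+d_D(d)).
\]
These two expressions agree on the nose, so the projection intertwines the differentials. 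The only genuine consistency point is that the dropped fifth coordinate $-\beta f(a)-d_D\beta(b)$ of the total-fiber differential is automatically the image of $-f(a)-d_B(b)$ under $\beta$, i.e.\ equal to $\beta$ applied to the new second coordinate; this holds because $\beta$ is a chain map, $\beta d_B = d_D\beta$, so $\beta(-f(a)-d_B(b)) = -\beta f(a) - d_D\beta(b)$. Hence the constraint $d'=\beta(b)$ is preserved by the differential, confirming that the total fiber is a subcomplex on which the projection restricts to an isomorphism of complexes onto the iterated fiber.

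Assembling: the projection is a degreewise $R$-module isomorphism that commutes with differentials, hence an isomorphism of chain complexes, which is exactly the assertion of Lemma~\ref{l:tfib=ifib}. I do not anticipate a real obstacle here; the only thing to be careful about is bookkeeping of signs, since both models carry nontrivial sign conventions (the $(-1)^{|T|}$ and $\mathrm{sgn}(\sigma_i^T)$ factors in Lemma~\ref{l:ifiber}, and the explicit signs in Remark~\ref{r:tfiber}), and one must confirm these genuinely coincide coordinate by coordinate rather than merely up to an overall automorphism. Since the paper has already pinned down both differentials explicitly in the $n=2$ case, this reduces to the term-by-term comparison above. If one wished to phrase the lemma for general $n$-cubes, the same strategy applies inductively using the inductive description in Lemma~\ref{l:ifiber} together with an analogous explicit total-fiber model, with the projection always forgetting the ``redundant'' path-space coordinates that are determined by the face maps, but for the square case the direct comparison is cleanest.
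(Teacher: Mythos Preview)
Your argument is correct and is exactly the straightforward verification the paper has in mind; the paper does not actually write out a proof, merely stating that the result is straightforward, and your explicit projection $(a,b,c,d,d')\mapsto(a,b,c,d)$ together with the sign check is the natural way to carry this out.
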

We use this to prove the next result.  
\begin {prop}\label{p:OmegaSigma}
Let $F:{\sc C}^{\eta}\rightarrow {\sc Ch(R)}$ where ${\sc C}$ is a simplicial model category and $\eta:k\rightarrow B$ is a morphism in ${\sc C}$.  Suppose further that $F(B)\simeq 0$, i.e., that $F$ is reduced, and that $F$ is a degree $1$ functor.  Then there is a weak equivalence of chain complexes:
\[
F(k)\simeq \Omega F(B\otimes _kB).
\]
In this context $\Omega$ indicates that the chain complex has been shifted down one degree, i.e., for a chain complex $X$, $(\Omega X)_k=X_{k+1}$.  
\end{prop}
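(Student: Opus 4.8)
The plan is to read the statement off from the hypothesis that $F$ is degree $1$, by unwinding the second cross effect of $F$ evaluated at the pair $(k,k)$, where $k=A$ is the initial object of $\Ce$.

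First I would identify the relevant square. In $\Ce$ the object $k$ is initial and $B$ is terminal, so the coproduct over $k$ satisfies $k\amalg_k k = k$ and $B\amalg_k k = k\amalg_k B = B$, while $B\amalg_k B$ is the object written $B\otimes_k B$ in the statement. Hence, by Definition \ref{d:cross} and the Example following it (and the standing cofibrant-replacement convention, which makes $cr_2F$ homotopy invariant without changing weak homotopy types), $cr_2F(k,k)$ is the iterated homotopy fiber of the square
\[\xymatrix{F(k)\ar[r]\ar[d]&F(B)\ar[d]\\ F(B)\ar[r]&F(B\otimes_kB).}\]
Setting $P:=\fib\bigl(F(k)\to F(B)\bigr)$ and $Q:=\fib\bigl(F(B)\to F(B\otimes_kB)\bigr)$, Lemma \ref{l:ifiber} (equivalently, the Example) identifies this iterated homotopy fiber with $\fib(P\to Q)$.

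Next I would invoke the two hypotheses. Since $F$ is degree $1$, Definition \ref{d:degree} gives $cr_2F(k,k)\simeq 0$, so $\fib(P\to Q)$ is acyclic and therefore $P\to Q$ is a quasi-isomorphism (long exact sequence of the fiber sequence $\fib(P\to Q)\to P\to Q$, or the short exact sequence coming from the explicit model of Definition \ref{d:fiber}). Since $F$ is reduced, $F(B)\simeq 0$; then the canonical projection $P\to F(k)$ is a quasi-isomorphism (its kernel is $\Omega F(B)$, which is acyclic), and the canonical inclusion $\Omega F(B\otimes_kB)=\fib(0\to F(B\otimes_kB))\hookrightarrow Q$ is a quasi-isomorphism (its cokernel is $F(B)$, which is acyclic). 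Composing the zig-zag
\[F(k)\ \xleftarrow{\ \sim\ }\ P\ \xrightarrow{\ \sim\ }\ Q\ \xleftarrow{\ \sim\ }\ \Omega F(B\otimes_kB)\]
produces the desired weak equivalence.

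The one step that needs care is the first: correctly computing the coproducts in $\Ce$, matching the square to Definition \ref{d:cross}, and keeping track of the order in which homotopy fibers are taken. After that, everything is a formal manipulation of fiber sequences of chain complexes --- or, equivalently, of the explicit models of Section \ref{s:ifibers}, where the required quasi-isomorphisms are visibly split short exact sequences with acyclic complementary terms. No naturality beyond this zig-zag is needed, since the proposition asserts only the existence of a weak equivalence.
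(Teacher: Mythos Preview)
Your argument is correct and follows the same overall strategy as the paper: identify the square defining $cr_2F(k,k)$, use the degree~$1$ hypothesis to make its ``fiber'' acyclic, and use reducedness to simplify the corners. The only technical difference is in how the conclusion is extracted. The paper passes through the total fiber: it invokes Lemma~\ref{l:tfib=ifib} to replace the vanishing iterated fiber by the vanishing total fiber, and then reads off
\[
F(k)\ \simeq\ \holim\bigl(F(B)\to F(B\otimes_kB)\leftarrow F(B)\bigr)\ \simeq\ \holim\bigl(0\to F(B\otimes_kB)\leftarrow 0\bigr)\ =\ \Omega F(B\otimes_kB)
\]
in one step. You instead stay with the iterated-fiber model of Definition~\ref{d:fiber} and Lemma~\ref{l:ifiber}, breaking the square into the two ordinary fiber sequences $P\to Q$ and simplifying each with the reducedness hypothesis. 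Your route avoids Lemma~\ref{l:tfib=ifib} and the total-fiber notion entirely, at the cost of a short zig-zag of quasi-isomorphisms; the paper's route is one line shorter but needs that extra lemma. Either is perfectly adequate here.
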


\begin{proof}
By Definitions \ref{d:cross} and \ref{d:degree}, the fact that $F$ is degree $1$ means that when it is applied to the square
\[
\xymatrix{k\otimes_kk=k\ar[r]\ar[d]&B\otimes_kk=B\ar[d]\\
k\otimes_k B=B\ar[r]&B\otimes_kB,}
\]
the iterated fiber of the resulting square
\[
\xymatrix{F(k)\ar[r]\ar[d]&F(B)\simeq 0\ar[d]\\
F(B)\simeq 0\ar[r]&F(B\otimes_kB),}
\]
is equivalent to $0$.  By Lemma \ref{l:tfib=ifib}, the total fiber is also equivalent to $0$, so that we have
\[
F(k)\simeq {\operatorname{holim}}(0\rightarrow F(B\otimes _kB)\leftarrow 0)\simeq \Omega F(B\otimes_k B).
\]
\end{proof}

If $G$ is a reduced functor, then $\Gamma_1G$ will be a reduced degree $1$ functor.   The following is an immediate consequence of the proposition.

\begin {cor}
Let $G:{\sc C}^{\eta}\rightarrow {\sc Ch(R)}$ where ${\sc C}$ is a simplicial model category and $\eta:k\rightarrow B$ is a morphism in ${\sc C}$.  Suppose further that $G$ is reduced.  Then there is a weak equivalence of chain complexes:
\[
\Gamma_1G(k)\simeq \Omega \Gamma_1G(B\otimes _kB).
\]
\end{cor}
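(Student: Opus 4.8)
The plan is to obtain this directly from Proposition \ref{p:OmegaSigma} applied to $F=\Gamma_1 G$. All that is needed is to check that $\Gamma_1 G$ satisfies the hypotheses of that proposition: that it preserves weak equivalences, that it is reduced, and that it is degree $1$. Given these, Proposition \ref{p:OmegaSigma} produces exactly the asserted weak equivalence $\Gamma_1 G(k)\simeq \Omega\,\Gamma_1 G(B\otimes_k B)$.

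Two of the three hypotheses are formal. That $\Gamma_1 G$ preserves weak equivalences is built into its construction: the simplicial object $\perp_2^{*+1}(-)$ is formed after precomposing with a functorial cofibrant replacement, hence is homotopy invariant by Remark \ref{n:hinvariant}(2), and the fat realization and the homotopy cofiber appearing in the definition of $\Gamma_1 G$ are homotopy invariant as well. That $\Gamma_1 G$ is degree $1$ is the case $n=1$ of the theorem above asserting that $\Gamma_n F$ is degree $n$ for every $F$.

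It remains to see that $\Gamma_1 G$ is reduced, that is, $\Gamma_1 G(B)\simeq 0$. Evaluating the defining cofibration sequence $|\perp_2^{*+1}G|\to G\to \Gamma_1 G$ at $B$ gives $\Gamma_1 G(B)\simeq \operatorname{hocofiber}\bigl(|\perp_2^{*+1}G(B)|\to G(B)\bigr)$, and $G(B)\simeq 0$ by hypothesis, so it is enough to show that every simplicial level $\perp_2^{j+1}G(B)$ is contractible; then $|\perp_2^{*+1}G(B)|\simeq 0$ and $\Gamma_1 G(B)\simeq \operatorname{hocofiber}(0\to 0)\simeq 0$. Since $\perp_2$ carries homotopy functors to homotopy functors (again by Remark \ref{n:hinvariant}(2) together with the cofibrant-replacement convention), each $\perp_2^{j}G$ is a homotopy functor, and writing $\perp_2^{j+1}G=\perp_2(\perp_2^{j}G)$ reduces the claim to showing that $\perp_2 H(B)\simeq 0$ for an arbitrary homotopy functor $H$. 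Now $\perp_2 H(B)=cr_2 H(B^{cof},B^{cof})$ is the iterated homotopy fiber of the square obtained by applying $H$ to $\sqcup_2^{(B^{cof},B^{cof})}$, and the two edges of that square in the slot-$1$ direction are obtained by applying $-\amalg_k B^{cof}$ and $-\amalg_k B$ to the weak equivalence $B^{cof}\weq B$, hence are weak equivalences in $\Ce$; applying the homotopy functor $H$ keeps them weak equivalences. Taking homotopy fibers first in that direction, as prescribed by Definition \ref{d:ifiber}, therefore yields contractible complexes at both remaining vertices, so the iterated homotopy fiber is contractible. This proves $\perp_2 H(B)\simeq 0$, and hence $\Gamma_1 G(B)\simeq 0$.

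The only point that is not purely formal is the claim that $-\amalg_k B^{cof}$ and $-\amalg_k B$ send the weak equivalence $B^{cof}\weq B$ to weak equivalences in $\Ce$; this is where the standing convention of evaluating cross effects on cofibrant objects is used, ensuring that these coproducts over $k$ compute homotopy coproducts and are therefore homotopy invariant in each variable. Beyond that there is no real obstacle: the statement that $\Gamma_1 G$ is a reduced degree $1$ functor is precisely the remark recorded just before the corollary, and the corollary then follows at once from Proposition \ref{p:OmegaSigma}.
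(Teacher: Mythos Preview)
Your approach is precisely the paper's: the sentence immediately preceding the corollary asserts that $\Gamma_1G$ is a reduced degree~$1$ functor whenever $G$ is reduced, and the corollary is then recorded as an immediate consequence of Proposition~\ref{p:OmegaSigma}. You have simply supplied the verification of reducedness that the paper leaves implicit.

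One small point deserves care. Your argument that $\perp_2H(B)\simeq 0$ rests on the claim that both slot-$1$ edges of the square $\sqcup_2^{(B^{\mathrm{cof}},B^{\mathrm{cof}})}$ are weak equivalences in $\Ce$. The edge $B^{\mathrm{cof}}\amalg_k B^{\mathrm{cof}}\to B\amalg_k B^{\mathrm{cof}}$ is indeed the cobase change of the weak equivalence $B^{\mathrm{cof}}\to B$ along a cofibration, so left properness of $\sc C$ handles it; but the edge $B^{\mathrm{cof}}\amalg_k B\to B\amalg_k B$ is a cobase change along the inclusion $B^{\mathrm{cof}}\to B^{\mathrm{cof}}\amalg_k B$, which is a cofibration only when $\eta\colon k\to B$ is one. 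The cofibrant-replacement convention you invoke in the last paragraph replaces the \emph{inputs} to $cr_2$, not the copies of $B$ appearing in the cube $\sqcup_2^{\bf X}$, so it does not by itself guarantee this second map is a weak equivalence. This is easily patched by assuming $\eta$ is a cofibration (harmless, and the paper tacitly does the same), but it is worth being explicit about.
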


Since $1$-excisive functors take any cocartesian square (rather than just those cocartesian squares whose initial object is $k$) to a cartesian square, a similar argument can be used to prove the next results, where $P_1G$ denotes the universal $1$-excisive approximation to $G$.  This is the first term in Goodwillie's Taylor tower for the functor $G$. (See \cite{BJM}, Definitions 4.1, 6.1, and 6.3.)   We use the following notation.

\begin{defn}
For $\eta: k\rightarrow B$ and $X$ in $\C^{\eta}$, let 
\[
\Sigma_B X=\hocolim (B\leftarrow X\rightarrow B),
\]
and for $n\geq 2$,
\[
\Sigma_B^n X=\hocolim(B\leftarrow \Sigma_B^{n-1}X\rightarrow B).
\] 
\end{defn}
 
We note that it follows directly from the definition of $P_1G$ that when $G$ is a reduced functor, $P_1G(X)$ will be an infinite loop object.  The next result shows how to realize $P_1G(X)$ as an $n$-fold loop object for any $n$.

\begin {prop}\label {p:deloop}
Let $F:{\sc C}^{\eta}\rightarrow {\sc Ch(R)}$ where ${\sc C}$ is a simplicial model category and $\eta:k\rightarrow B$ is a morphism in ${\sc C}$. Let $X$ be an object in $\C^{\eta}$.   Suppose further that $F(B)\simeq 0$, i.e., that $F$ is reduced, and that $F$ is a $1$-excisive functor (see Definition 4.1 of \cite{BJM}).  Then for all $n\geq 1$ there is a weak equivalence of chain complexes:
\[
F(X)\simeq \Omega^n F(\Sigma_B^nX).
\]
\end{prop}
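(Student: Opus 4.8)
The plan is to reproduce the argument of Proposition~\ref{p:OmegaSigma}, but with the initial object $k$ of the relevant square replaced by an arbitrary object, and then to iterate $n$ times. The only new input is that $F$ is $1$-excisive rather than merely degree $1$, which is exactly what permits us to feed in a homotopy cocartesian square whose initial object is not $k$. I would argue by induction on $n$, the heart of the matter being the case $n=1$.

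\textbf{Base case.} Fix any $Y\in\C^{\eta}$ and consider the square
\[
\xymatrix{Y\ar[r]\ar[d]&B\ar[d]\\ B\ar[r]&\Sigma_B Y,}
\]
in which both maps $Y\to B$ are the structure map of $Y$ and $\Sigma_B Y=\hocolim(B\leftarrow Y\rightarrow B)$. First note that $\Sigma_B Y$ is again an object of $\C^{\eta}$: take $k\rightarrow\Sigma_B Y$ to be $k\xrightarrow{\eta}B\rightarrow\Sigma_B Y$ and $\Sigma_B Y\rightarrow B$ to be the map induced by $(\id_B,\id_B,Y\rightarrow B)$, and observe that the composite is $\eta$. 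By construction this square is homotopy cocartesian, so since $F$ is $1$-excisive, applying $F$ yields a homotopy cartesian square. Because $F(B)\simeq 0$, this says precisely that the total fiber of
\[
\xymatrix{F(Y)\ar[r]\ar[d]&0\ar[d]\\ 0\ar[r]&F(\Sigma_B Y)}
\]
is weakly trivial; equivalently, the canonical map $F(Y)\rightarrow\holim(0\rightarrow F(\Sigma_B Y)\leftarrow 0)$ has weakly trivial homotopy fiber. By Definition~\ref{d:fiber} the target is $\Omega F(\Sigma_B Y)$, and since ${\sc Ch(R)}$ is stable (equivalently, by the long exact homology sequence of a homotopy fiber sequence) a map with weakly trivial homotopy fiber is a weak equivalence. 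Hence $F(Y)\simeq\Omega F(\Sigma_B Y)$.

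\textbf{Inductive step.} Suppose $n\geq 2$ and $F(X)\simeq\Omega^{n-1}F(\Sigma_B^{n-1}X)$, noting $\Sigma_B^{n-1}X\in\C^{\eta}$ by the same observation applied inductively. Applying the base case with $Y=\Sigma_B^{n-1}X$ gives $F(\Sigma_B^{n-1}X)\simeq\Omega F(\Sigma_B^{n}X)$. Since $\Omega$ is a degree shift of chain complexes it is exact and preserves weak equivalences, so applying $\Omega^{n-1}$ and composing with the inductive hypothesis yields
\[
F(X)\simeq\Omega^{n-1}\Omega F(\Sigma_B^{n}X)=\Omega^{n}F(\Sigma_B^{n}X),
\]
completing the induction. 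The argument is essentially formal, so there is no serious obstacle; the points needing care are (i) checking that $\Sigma_B^{n}X$ genuinely lies in $\C^{\eta}$ and that the displayed square is homotopy cocartesian in the relevant sense, which is bookkeeping with the structure maps and the usual functorial cofibrant replacement (under which $F$, being a homotopy functor, is insensitive), and (ii) the identification $\holim(0\rightarrow Z\leftarrow 0)\simeq\Omega Z$ via the explicit model of Definition~\ref{d:fiber} — the same identification already used in the proof of Proposition~\ref{p:OmegaSigma}.
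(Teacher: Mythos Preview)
Your proposal is correct and follows essentially the same route as the paper: apply $1$-excisiveness to the defining homotopy cocartesian square for $\Sigma_B X$, use $F(B)\simeq 0$ to identify the resulting cartesian square with $F(X)\simeq\Omega F(\Sigma_B X)$, and induct. You add a few extra verifications (that $\Sigma_B^n X\in\C^{\eta}$, and that trivial total fiber forces a weak equivalence via stability), which the paper leaves implicit, but the argument is the same.
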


\begin{proof}
Consider the cocartesian square that defines $\Sigma_B X$:
\[
\xymatrix{X\ar[r]\ar[d]&B\ar[d]\\
B\ar[r]&\Sigma_B X.}
\]
Since $F$ is $1$-excisive, applying it to this square yields a cartesian square
\[
\xymatrix{F(X)\ar[r]\ar[d]&F(B)\ar[d]\\
F(B)\ar[r]&F(\Sigma_B X).}
\]
Then, as in the  proof of Proposition \ref{p:OmegaSigma} (where $F$ was degree 1), we have
\[
F(X)\simeq \holim (F(B)\rightarrow F(\Sigma_B X)\leftarrow F(B))\simeq \Omega F(\Sigma _B X).
\]
The result follows by induction on $n$.  
\end{proof}

By noting that Theorem 6.9 of \cite{BJM} holds in this context, i.e., that $\Gamma_1F$ and $P_1F$ are weakly equivalent when evaluated at the initial object $k$ in $\C^{\eta}$, we can use this to show that $\Gamma_1F(k)$ is also an $n$-fold loop object for any $n$.  

\begin{cor}\label{c:deloop}
Let $F:{\sc C}^{\eta}\rightarrow {\sc Ch(R)}$ where ${\sc C}$ is a simplicial model category and $\eta:k\rightarrow B$ is a morphism in ${\sc C}$.   Suppose further that $F$ is reduced.  Then for all $n\geq 1$ there is a weak equivalence of chain complexes:
\[
\Gamma_1F(k)\simeq P_1 F(k)\simeq \Omega^n P_1 F(\Sigma_B^nk).
\]
\end{cor}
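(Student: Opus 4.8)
The plan is to obtain the corollary by feeding the functor $P_1F$ into Proposition \ref{p:deloop} and then identifying $P_1F(k)$ with $\Gamma_1F(k)$ via the comparison of the degree $1$ and $1$-excisive towers. Thus the argument has two halves: first, that Proposition \ref{p:deloop} applies with $P_1F$ in place of $F$; second, that the equivalence $\Gamma_1F(k)\simeq P_1F(k)$ of \cite{BJM} remains valid for functors landing in ${\sc Ch(R)}$.

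For the first half, I would note that $P_1F$ is a homotopy functor --- it is built from $F$ by homotopy limits and a filtered homotopy colimit --- and is $1$-excisive by construction, being the universal $1$-excisive approximation to $F$ in the sense of Definitions 4.1, 6.1 and 6.3 of \cite{BJM}. The point that requires a (short) argument is that $P_1F$ is again reduced. This holds because the construction of $P_1F$ is natural in $F$ and, evaluated at the terminal object $B$ of ${\sc C}^\eta$, only involves the values of $F$ on objects weakly equivalent to $B$: the cones and iterated suspensions $\Sigma_B^jB$ of the terminal object rel $B$ are themselves weakly equivalent to $B$. Since $F(B)\simeq 0$ by hypothesis, $P_1F(B)\simeq 0$, so $P_1F$ is reduced. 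Proposition \ref{p:deloop} then applies with $F$ replaced by $P_1F$ and $X$ taken to be the initial object $k$, yielding for every $n\geq 1$ a weak equivalence of chain complexes
\[ P_1F(k)\simeq \Omega^n P_1F(\Sigma_B^n k). \]

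For the second half, I would invoke Theorem 6.9 of \cite{BJM}, which says that for a reduced functor the natural comparison between $\Gamma_1F$ and $P_1F$ is a weak equivalence after evaluation at the initial object $k$. As noted in the text, the proof of that theorem uses only formal properties of adjunctions, homotopy limits and homotopy colimits, the stability of the target, and the interchange of finite homotopy limits with finite and filtered homotopy colimits --- all of which hold for ${\sc Ch(R)}$ by the discussion in Section \ref{s:recap} --- so it transfers to the present setting and gives $\Gamma_1F(k)\simeq P_1F(k)$. Concatenating this with the equivalence from the first half produces
\[ \Gamma_1F(k)\simeq P_1F(k)\simeq \Omega^n P_1F(\Sigma_B^n k), \]
as claimed. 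I expect the only genuinely non-formal step to be the verification that $P_1F$ is reduced; once Definition 6.1 of \cite{BJM} is unwound this is a direct computation using $\Sigma_BB\simeq B$ and $F(B)\simeq 0$, and the remaining inputs are routine given the homotopy-theoretic properties of ${\sc Ch(R)}$ recorded earlier.
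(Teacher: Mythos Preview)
Your proposal is correct and follows essentially the same approach as the paper: apply Proposition~\ref{p:deloop} to $P_1F$ (which is $1$-excisive and reduced) with $X=k$, and invoke Theorem~6.9 of \cite{BJM} for the identification $\Gamma_1F(k)\simeq P_1F(k)$. The paper is terser---it merely notes that Theorem~6.9 of \cite{BJM} holds in this context and leaves the reducedness of $P_1F$ implicit---whereas you spell out why $P_1F(B)\simeq 0$ and why the transfer to ${\sc Ch(R)}$ goes through; but the logical structure is the same.
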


\section {The Cotriple $t$}\label{s:cotriple}
In this section, we revisit the functor $t$, as defined in Definition \ref{d:cross} and show that it is a cotriple on the category of functors from $\Cen n$ to ${\sc Ch(R)}$.  As discussed in the introduction to this paper, since ${\sc Ch(R)}$ is not a simplicial model category, we cannot use the analogous result (Theorem 3.8) in  [BJM].  
However, Lemma \ref{l:ifiber} gives us an explicit chain complex model for the iterated fiber.  We show below that one can work  with this model  to prove directly that $t$ is a cotriple in this setting.   The remainder of this section will be used to prove the next theorem.

\begin{thm}\label{t:tiscotriple}
There are natural transformations $\gamma: t\rightarrow {\rm id}$ and $\xi:t\rightarrow tt$ that make $(t,\gamma, \xi)$ a cotriple on ${\rm Fun}(\Cen n, {\sc Ch(R)})$.  
\end{thm}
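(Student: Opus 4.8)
The plan is to leverage the explicit chain-level model of Lemma \ref{l:ifiber} so that every required identity can be checked on the nose. Fix $G \in \Fun(\Cen n, \sc{Ch(R)})$ and a tuple ${\bf X} = (X_1,\dots,X_n)$; by Lemma \ref{l:ifiber}, the degree-$k$ part of $tG({\bf X}) = \ifiber(G^{\bf X})$ is the $R$-module $\bigoplus_{T \subseteq {\bf n}} G^{\bf X}(T)_{k+|T|}$, with the differential given by the signed formula in the statement. First I would unwind $ttG({\bf X})$: applying $t$ again means forming the $n$-cube $(tG)^{\bf X}$ and taking its iterated fiber, so in degree $k$ we get $\bigoplus_{S \subseteq {\bf n}} \big( (tG)^{\bf X}(S) \big)_{k+|S|} = \bigoplus_{S \subseteq {\bf n}} \bigoplus_{T \subseteq {\bf n}} G^{{\bf X}(S)}(T)_{k+|S|+|T|}$, where ${\bf X}(S)$ denotes the tuple with the $B$'s inserted in the slots indexed by $S$. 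The key combinatorial observation is that $G^{{\bf X}(S)}(T) = G^{\bf X}(S \cup T)$, since inserting $B$ in slot $i$ either because $i \in S$ or because $i \in T$ gives the same object, and this object only depends on which slots hold $B$. So $ttG({\bf X})_k = \bigoplus_{S,T} G^{\bf X}(S\cup T)_{k+|S|+|T|}$.

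Next I would define $\gamma$ and $\xi$ explicitly at chain level. For the counit $\gamma_G : tG \to G$, note the $T = \emptyset$ summand of $\ifiber(G^{\bf X})$ is precisely $G^{\bf X}(\emptyset) = G({\bf X})$; define $\gamma_G$ to be the projection onto this summand (in each degree). One checks this is a chain map directly from the differential formula — the only terms of the differential landing in the $T=\emptyset$ summand come from $T=\emptyset$ itself (contributing $d_{G({\bf X})}$) since the maps $G(\sigma_i^T)$ increase $|T|$, so projection commutes with $d$. For the comultiplication $\xi_G : tG \to ttG$, I would send the summand of $tG({\bf X})$ indexed by $R \subseteq {\bf n}$ (in degree $k$, this is $G^{\bf X}(R)_{k+|R|}$) to the sum of those summands of $ttG({\bf X})$ indexed by pairs $(S,T)$ with $S \cup T = R$ and $S \cap T = \emptyset$ — i.e., the "diagonal-type" inclusion that records the canonical decompositions, with signs chosen so that the map is a chain map. (Concretely one expects $\xi$ to pick out, for each $R$, the single term $(S,T) = (\emptyset, R)$ or a signed sum over ordered disjoint decompositions; determining the exact sign convention that makes $\xi$ a chain map and the coalgebra identities hold is the bookkeeping heart of the argument.) The naturality of both $\gamma$ and $\xi$ in $G$ is immediate since all the structure maps are built functorially from $G$.

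Then I would verify the three cotriple axioms. Counit: the two composites $tG \xrightarrow{\xi} ttG \xrightarrow{\gamma t} tG$ and $tG \xrightarrow{\xi} ttG \xrightarrow{t\gamma} tG$ must each be the identity. Under the identification above, $\gamma t$ projects $ttG$ onto the $S = \emptyset$ part and $t\gamma$ onto the $T = \emptyset$ part, and composing with $\xi$ picks out exactly the original summand indexed by $R$ (with the decomposition $(\emptyset, R)$, resp. $(R, \emptyset)$), so both composites are the identity — provided the signs were fixed correctly. Coassociativity: $\xi t \circ \xi = t\xi \circ \xi : tG \to tttG$, where $tttG({\bf X})_k = \bigoplus_{S,T,U} G^{\bf X}(S \cup T \cup U)_{k+|S|+|T|+|U|}$; both composites send the $R$-summand to the summands indexed by ordered disjoint decompositions $R = S \sqcup T \sqcup U$, and one checks the sign contributions agree. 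Throughout I would repeatedly invoke that $G^{\bf X}(-)$ depends only on the union of the index sets, which is what collapses the iterated constructions cleanly.

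\textbf{Main obstacle.} I expect the genuine difficulty to be entirely in the signs: Lemma \ref{l:ifiber} carries the sign $(-1)^{|T|}$ on the internal differential and the sign $(-1)^{\mathrm{sgn}(\sigma_i^T)+1}$ on the cube maps, and when one iterates $t$ the summands of $ttG$ pick up signs depending on the relative order of the slots in $S$ versus $T$. Getting $\gamma$ and $\xi$ to be honest chain maps — and then getting the counit and coassociativity diagrams to commute strictly rather than up to sign — requires choosing the right sign for the inclusion $\xi$ on each disjoint-decomposition summand, almost certainly something like $(-1)^{\epsilon(S,T)}$ where $\epsilon(S,T)$ counts inversions between $S$ and $T$ inside ${\bf n}$. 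Verifying that this single choice simultaneously makes $\xi$ a chain map, makes both counit composites the identity, and makes coassociativity hold on the nose is the crux; everything else (naturality, the module-level identifications, the fact that these are weak equivalences hence genuinely model the homotopy-theoretic $t$) is formal given Lemma \ref{l:ifiber} and Remark \ref{n:hinvariant}.
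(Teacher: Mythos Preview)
Your proposal is correct and follows essentially the same path as the paper. The paper indeed defines $\gamma$ as projection onto the $\emptyset$-summand and defines $\xi$ as the signed sum over all ordered disjoint decompositions $R = S \sqcup T$ with sign $(-1)^{\epsilon(S,T)}$ where $\epsilon(S,T)=|\{i<j : i\in T,\ j\in S\}|$ counts inversions---exactly the sign you conjectured; the verification that $\xi$ is a chain map and that the counit and coassociativity identities hold strictly is then carried out via this inversion bookkeeping (packaged in the paper using a $2\times n$ and $3\times n$ matrix notation for pairs and triples of subsets). Your momentary hesitation between ``the single term $(\emptyset,R)$'' and ``a signed sum over decompositions'' should be resolved in favor of the latter: the single-term map would satisfy only one of the two counit identities.
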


To define $\gamma$ and $\xi$ and to make it easier to show that these maps satisfy the coassociativity and counitality conditions necessary for $t$ to be a cotriple, we  introduce some alternative notation for subsets of ${\bf n}$, ${\bf 2n}$, and ${\bf 3n}$ for a fixed $n\geq 1$.   
In some cases, it will be convenient to treat subsets of ${\bf n}$, ${\bf 2n}$, and ${\bf 3n}$ as matrices.  
\begin{defn}\label{n:matrix}
\begin{itemize}
\item[]
\item Let $M_{tn}$ be the set of $t\times n$ matrices whose entries are either $0$ or $1$.
\item Let $U$ be a subset of ${\bf n}$.   We will identify $U$ with the $1\times n$ matrix $[u_i]$ with 
\[
u_i=\begin{cases}1&\text{if}\  i\in U,\\
0&\text{if}\  i\notin U.\\
\end{cases}
\]
Note that under this notation, $|U|=\sum_{i=1}^nu_i$.  
\item A subset $W\subseteq {\bf 2n}$ will be represented by a $2\times n$ matrix $[w_{ij}]$ with 
\[
w_{1j}=\begin{cases} 1&\text{if}\  j\in W,\\
0&\text{if}\ j\notin W,\\
\end{cases}
\]
and 
\[
w_{2j}=\begin{cases} 1&\text{if}\  n+j\in W,\\
0&\text{if}\ n+j\notin W.\\
\end{cases}
\]
Again, we have $|W|=\sum _{j=1}^2\sum _{i=1}^n w_{ij}.$
\item Similarly, for $V\subseteq {\bf 3n}$, we identify $V$ with the $3\times n$ matrix $[v_{ij}]$ where
\[v_{ij}=
\begin{cases} 
1&\text{if}\ (i-1)n+j\in V, \\
0&\text{if}\ (i-1)n+j\notin V. \\
\end{cases}
\]
As before, $|V|=\sum_{i=1}^3\sum_{j=1}^{n}v_{ij}.$
\item For $U\subseteq {\bf n}$, let $M_{2n}(U)=\{W\in M_{2n}\ |\ w_{1j}+w_{2j}=u_j\  \mathrm{for\ all\ }j\}$ and $M_{3n}(U)=\{A\in M_{3n}\ |\ a_{1j}+a_{2j}+a_{3j}=u_j\ \mathrm{for\ all}\ j\}$.  

\end{itemize} 
\end{defn}

\begin{ex}  Let $n=2$.  For $U=\{1\}\subseteq {\bf 2}$, the matrix associated to $U$ is $[1\ 0]$ and $M_{2n}(U)$ consists of the matrices
\[
\left [\begin{matrix}1&0\\0&0\end{matrix}\right ], \left [\begin{matrix}0&0\\1&0\end{matrix}\right ].
\]
For $U=\{1,2\}\subseteq {\bf 2}$,  the matrix associated to $U$ is $[1 \ 1]$ and $M_{2n}(U)$ consists of the matrices
\[\left [\begin{matrix}1&0\\0&1\end{matrix}\right ], \left [\begin{matrix}0&1\\1&0\end{matrix}\right ], \left [\begin{matrix}1&1\\0&0\end{matrix}\right ], \left [\begin{matrix}0&0\\1&1\end{matrix}\right ].
\]
\end{ex}
In other cases, it will be convenient to view subsets of ${\bf 2n}$ as ordered pairs of subsets of ${\bf n}$, and subsets of ${\bf 3n}$ as ordered triples of subsets of ${\bf n}$.
\begin{rem}\label{n:tuple}
There is a one-to-one correspondence between subsets of ${\bf 2n}$ and elements of $\P({\bf n})\times \P({\bf n})$ given by 
\[
W\subseteq {\bf 2n}\mapsto (W_1, W_2)
\]
where $W_1=\{i\ |\ 1\leq i\leq n\ {\rm and}\ i\in W\}$ and $W_2=\{j-n\ | \ n+1\leq j\leq 2n\ {\rm and}\ j\in W\}$.  
Similarly, there is a one-to-one correspondence between subsets of ${\bf 3n}$ and elements of $\P({\bf n})^{\times 3}$ given by 
\[
S\subseteq {\bf 3n}\mapsto (S_1, S_2, S_3)
\]
where 
\[S_t=\{i-(t-1)n\ |\ (t-1)n+1\leq i\leq tn\ {\rm and}\ i\in S\}.  \]
\end{rem}

\begin{ex}
For $n=2$ and $W=\{1,2,4,5\}\subseteq {\bf 3n}$, $W_1=\{1,2\}$, $W_2=\{2\}$, and $W_3=\{1\}$.  
\end{ex}
With this, we can describe $ttF({\bf X})$ for a functor $F$ and $n$-tuple $\bf X$.  
The key to constructing the natural transformation $\xi:tF \to ttF$ is understanding $ttF$ as the iterated fiber of a $(2n)$-cube.  Recall from Lemma \ref{l:ifiber} that in degree $k$, 
\[
tF({\bf X})_k=\bigoplus _{T\subseteq {\bf n}}F({\bf X}(T))_{k+|T|}. 
\]
For an $n$-tuple ${\bf X}=(X_1, \dots, X_n)$, $ttF({\bf X})$ is the iterated fiber of the $n$-cube that assigns $tF({\bf X}(U))$ to  $U\subseteq {\bf n}$.  It follows immediately that
\[ ttF({\bf X})_k = \bigoplus_{U\subseteq {\bf n}} tF({\bf X}(U))_{k+|U|}.\]     But $tF({\bf X}(U))$ is itself the iterated fiber of an $n$-cube, and it is straightforward to show that it is the iterated fiber of the $n$-cube that assigns $F({\bf X}(U\cup V))$ to $V\subseteq {\bf n}$.  In this way, we see that $ttF({\bf X})$ is the iterated fiber of the $(2n)$-cube given by 
\[
W\subseteq {\bf 2n}\mapsto F({\bf X}(W_1 \cup W_2))
\]
where $W_1$ and $W_2$ are as defined in Remark \ref{n:tuple}.  
Under this correspondence, 
\[
ttF({\bf X})_k=\bigoplus _{(U,V)\in \P({\bf n})\times \P({\bf n})}F({\bf X}(U\cup V))_{k+|U|+|V|}.
\]

We now define the natural transformations $\gamma$ and $\xi$ of the cotriple $(t, \gamma, \xi)$.  
\begin{defn}
Let $F:{\Cen n}\rightarrow {\sc Ch(R)}$ and ${\bf X}=(X_1, \dots, X_n)$ be an $n$-tuple of objects in $\C^{\eta}$. 
\begin{itemize}
\item To define the natural transformation $\xi: tF({\bf X})\rightarrow ttF({\bf X})$, we produce maps for each $k$:  
\[
\bigoplus _{T\subseteq {\bf n}}F({\bf X}(T))_{k+|T|} \to \bigoplus _{(U,V)\in \P({\bf n})\times \P({\bf n})}F({\bf X}(U\cup V))_{k+|U|+|V|}.
\]
For a fixed $T\subseteq {\bf n}$ and $y\in F({\bf X}(T))_{k+|T|}$, the natural transformation $\xi$ sends $y$ to the sum of terms
\[
\mathop{\sum}_{ \substack{(V_1, V_2)\in P({\bf n})\times P({\bf n}), \\  V\in M_{2n}(T)}}(-1)^{{\mathrm {sgn}}(V)}y
,\] where ${\mathrm{sgn}}(V)$ is determined as follows.  
If $V=[v_{ij}]$,  then
\[
{\mathrm {sgn}}(V)=|\{i<j\ | \ v_{2i}=v_{1j}=1\}|.
\]
{ We note that for $V\in M_{2n}(T)$, $(-1)^{{\rm sgn}(V)}y$ is in the summand indexed by the pair $(V_1, V_2)\in \P({\bf n})\times \P({\bf n})$ corresponding to the subset of ${\bf 2n}$ associated with $V$. } 
\item The natural transformation $\gamma: tF({\bf X})\rightarrow F({\bf X})$ is given in degree $k$ by projection onto the summand indexed by $\emptyset$.  That is, 
\[
\gamma: tF({\bf X})_k=\bigoplus _{T\subseteq {\bf n}}F({\bf X}(T))_{k+|T|}\rightarrow F({\bf X}(\emptyset))_k=F(X_1, \dots, X_n).  
\]
\end{itemize} 
\end{defn}
Before proceeding, we need to make sure that these maps are chain maps.   
\begin{lem} \label{l:chainmap}  For a functor $F$ and $n$-tuple ${\bf X}$, $\gamma: tF({\bf X})\rightarrow F({\bf X})$ and $\xi: tF({\bf X})\rightarrow ttF({\bf X})$ are chain maps.
\end{lem}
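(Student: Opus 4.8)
The plan is to verify the chain map condition directly from the explicit differential formula in Lemma \ref{l:ifiber}, treating $\gamma$ and $\xi$ separately. For $\gamma$, the argument is essentially a one-line check: by Lemma \ref{l:ifiber}, the differential on $tF(\mathbf X)_k = \bigoplus_{T \subseteq \mathbf n} F(\mathbf X(T))_{k+|T|}$ sends the summand indexed by $\emptyset$ to $d_{F(\mathbf X(\emptyset))}(x) + \sum_{i \notin \emptyset}(-1)^{\mathrm{sgn}(\sigma_i^\emptyset)+1} F(\mathbf X)(\sigma_i^\emptyset)(x)$, and the second term lands entirely in summands indexed by singletons $\{i\}$, which $\gamma$ kills; meanwhile $\gamma d = d_{F(\mathbf X)}$ on the $\emptyset$-summand, so $\gamma d = d_{F(\mathbf X)}\gamma$. (One should also note that on any summand indexed by a nonempty $T$, both $d_{F(\mathbf X)}\gamma$ and $\gamma d$ vanish, the latter because the differential formula never produces a term indexed by a subset strictly smaller than $T$.)

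For $\xi$, the plan is to describe the differential on $ttF(\mathbf X)_k = \bigoplus_{(U,V)} F(\mathbf X(U \cup V))_{k+|U|+|V|}$ via Lemma \ref{l:ifiber} applied to the $(2n)$-cube $W \mapsto F(\mathbf X(W_1 \cup W_2))$, and then compare $\xi d$ with $d\xi$ summand by summand. Fix $T \subseteq \mathbf n$ and $y \in F(\mathbf X(T))_{k+|T|}$. Both composites are sums of three types of terms: (i) internal-differential terms $\pm d_{F(\mathbf X(T))}(y)$, (ii) face-map terms $F(\mathbf X(\sigma))(y)$ for inclusions that enlarge $T$ within $\mathbf n$ (in $d\xi$ these come from the $(2n)$-cube differential, in $\xi d$ from the $n$-cube differential followed by $\xi$), and (iii) no other terms, since the only maps in the relevant cubes are the structure maps of $\mathbf X$. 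The crux is a bookkeeping identity: for each $T$ and each $i \notin T$, the edge $T \subsetneq T \cup \{i\}$ in the $n$-cube, after applying $\xi$, must match the sum over those edges in the $(2n)$-cube that enlarge some $V \in M_{2n}(T)$ to a $V' \in M_{2n}(T \cup \{i\})$ — i.e. adding $i$ to either the first or the second row. The sign comparison reduces to showing
\[
(-1)^{\mathrm{sgn}(\sigma_i^T) + \mathrm{sgn}(V)} = (-1)^{\mathrm{sgn}(\tilde\sigma) + \mathrm{sgn}(V')},
\]
where $\tilde\sigma$ is the appropriate edge of the $(2n)$-cube and the signs $\mathrm{sgn}$ are read off via the matrix/$2n$-set conventions of Definition \ref{n:matrix} and the $\mathrm{sgn}(V) = |\{i<j : v_{2i}=v_{1j}=1\}|$ formula; one checks the two cases (inserting $i$ in row $1$ versus row $2$) using that the linear ordering on $\mathbf{2n}$ puts all of $\{1,\dots,n\}$ before all of $\{n+1,\dots,2n\}$.

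The main obstacle I expect is exactly this sign reconciliation: one has to be careful that $\mathrm{sgn}(\sigma_i^T)$ (counting elements of $T$ larger than $i$, within $\mathbf n$) combines correctly with the ``crossing number'' $\mathrm{sgn}(V)$, and that the contributions of the two rows of $V$ organize themselves so that, after summing over all $V \in M_{2n}(T)$ with the correct signs, one recovers precisely $\xi$ applied to the $n$-cube face term — with no sign discrepancy and no leftover terms. Everything else is a routine unwinding of Lemma \ref{l:ifiber}. I would therefore structure the proof of Lemma \ref{l:chainmap} as: first dispose of $\gamma$ in a sentence; then set up the summand-wise comparison for $\xi$, isolating the internal-differential terms (which match trivially up to the overall $(-1)^{|T|}$ versus $(-1)^{|U|+|V|}$ with $|U|+|V|=|T|$) and the face terms; and finally carry out the two-case sign computation as the technical heart, invoking the conventions of Definitions \ref{n:matrix} and Remark \ref{n:tuple}.
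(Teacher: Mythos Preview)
Your overall approach mirrors the paper's --- both proceed by direct comparison of $\xi d_t$ and $d_{tt}\xi$ summand by summand using the explicit differential of Lemma~\ref{l:ifiber} --- and your treatment of $\gamma$ and of the internal-differential terms (via $|U|+|V|=|T|$) is correct. But your claim ``(iii) no other terms'' is false, and this omission is precisely the step the paper singles out as the crux of the argument.

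When you apply $d_{tt}$ to $\xi(y)$ you must sum over \emph{all} edges $W\subsetneq W\cup\{j\}$ of the $(2n)$-cube, for every $W\in M_{2n}(T)$. If $j\leq n$ with $j+n\in W$ (or $j>n$ with $j-n\in W$), then the map $F({\bf X}(\tau_{j,W}))$ is the identity, and the resulting matrix for $W\cup\{j\}$ has column~$j$ (mod $n$) summing to $2$; such a matrix lies in no $M_{2n}(T\cup\{i\})$ and hence cannot be matched with any term of $\xi d_t(y)$. These extra terms arise for every $j\in T$ and every $W\in M_{2n}(T)$, and the proof must show they cancel in pairs: any matrix $R$ with a column of sum $2$ arises from exactly two $W,W'\in M_{2n}(T)$ (differing only in which row of that column carries the $1$), and one checks that the signs $(-1)^{\mathrm{sgn}(W)+\mathrm{sgn}(\sigma_j^W)}$ and $(-1)^{\mathrm{sgn}(W')+\mathrm{sgn}(\sigma_{j\pm n}^{W'})}$ are opposite. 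Only after this cancellation does the bijection and sign-matching you outline between the surviving terms of $d_{tt}\xi$ and those of $\xi d_t$ finish the proof.
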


\begin{proof}
The fact that $\gamma$ is a chain map follows directly from the definition.  The proof that $\xi$ is a chain map involves some bookkeeping. We use $d_t$ to denote the differential for $tF({\bf X})$ and $d_{tt}$ to denote the differential for $ttF({\bf X})$.   We must show that the diagram 
\[
\xymatrix{tF({\bf X})_k\ar[r]^{\xi}\ar[d]_{d_t}&ttF({\bf X})_k\ar[d]^{d_{tt}}\\
tF({\bf X})_{k-1}\ar[r]_{\xi}&ttF({\bf X})_{k-1}}
\]
commutes for all $k$.   

Recall that  
\[
ttF({\bf X})_k=\bigoplus _{(U,V)\in \P({\bf n})\times \P({\bf n})}F({\bf X}(U\cup V))_{k+|U|+|V|}.
\]

To understand $d_{tt}$, let $W\subseteq {\bf 2n}$ and  $W_1, W_2$ be the pair of subsets of ${\bf n}$ corresponding to $W$. Let  $y\in F({\bf X}(W_1\cup W_2))_{k+|W|}\subseteq ttF({\bf X})_k$.  From Lemma \ref{l:ifiber}, recall that $\sigma_i^T$ denotes the inclusion $T\to T\cup \{i\}$, and ${\mathrm {sgn}}(\sigma_i^T)=|\{s\in T\ |\ s>i\}|$.   The differential $d_{tt}(y)$ is the sum
\[
(-1)^{|W|}d(y)+\sum _{s\notin W}(-1)^{\mathrm{sgn}(\sigma ^W_s)+1}F({\bf X}(\tau _{s,W}))(y)
\]
where  the term indexed by $s\notin W$ is in the summand of $ttF({\bf X})_{k-1}$ indexed by $W\cup \{s\}$, and in terms of morphisms in $\P({\bf n})$, 
\[
\tau_{s,W}=\begin{cases} {\mathrm {id}}&\text{if}\ s+n\in W\ \text{or}\ s-n\in W,\\
\sigma _s^{W_1\cup 
W_2}&\text{if}\ s\leq n, s+n\notin W, \\
\sigma_{s-n}^{W_1\cup 
W_2}&\text{if}\ s>n, s-n\notin W.
\end{cases}
\]

Now consider $x\in F({\bf X}(U))_{k+|U|}\subseteq tF({\bf X})_k$.   Using $W$ to represent both a subset of ${\bf 2n}$ and its corresponding matrix, we have 
\begin{align}\label{dxi}
d_{tt}\xi(x)&=\sum _{W\in M_{2n}(U)}(-1)^{\mathrm{sgn}(W)}(-1)^{|W|}d(x)\\
&+\sum_{W\in M_{2n}(U)}\sum _{j\notin W}(-1)^{\mathrm{sgn}(W)}(-1)^{\mathrm{sgn}(\sigma_j^W)+1}F({\bf X}(\tau _{j,W}))(x),
\end{align}
and
\begin{align}\label{xid}
\xi d_t(x)&=\sum_{W\in M_{2n}(U)}(-1)^{\mathrm{sgn}(W)}(-1)^{|U|}d(x)\\
&+\sum _{i\notin U}\sum_{V\in M_{2n}(U\cup \{i\})}(-1)^{\mathrm{sgn}(\sigma_i^U)+1}(-1)^{\mathrm{sgn}(V)}F({\bf X}(\sigma_i^U))(x).
\end{align}
The first sums, in lines (3) and (5), are the same for both compositions since $|W|=|U|$ for a set $W$ corresponding to a matrix  in $M_{2n}(U)$.  We must show that the sums (4) and (6) are the same.  Expanding the sum in (4), one finds that it has more terms than the sum in  (6).   
%
{ The extra terms all correspond to sets $R=W\cup \{j\}$ where either $j+n$ or $j-n$ is in $W$.  These are also the terms where $\tau_{s,W}$ is the identity.  For any such $R$, one can show that there are two terms mapped into the summand indexed by $R$, one corresponding to the matrix in $M_{2n}(U)$ with a $1$ in the position corresponding to $j$ in the first row and the other with that $1$ in the second row.  For example, if $j+n\in W$, these matrices correspond to $W\cup \{j\}$ and $W'\cup \{j+n\}$ where $W'=(W-\{j+n\})\cup \{j\}$ and the  matrices are
\[W=\left[\begin{matrix}  x_{11}&\dots&x_{1j-1}&0&x_{1j+1}&\dots&x_{1n}\\
x_{21}&\dots&x_{2j-1}&1&x_{2j+1}&\dots&x_{2n}
\end{matrix} \right],\]
and
\[ W'=\left[\begin{matrix}  x_{11}&\dots&x_{1j-1}&1&x_{1j+1}&\dots&x_{1n}\\
x_{21}&\dots&x_{2j-1}&0&x_{2j+1}&\dots&x_{2n}
\end{matrix} \right].
\]  The signs cause the two terms to cancel one another.  The other terms in (4) are indexed by pairs $(W,j)$ where neither $j+n$ nor $j-n$ are in $W$. }   One can show that all such terms in (4) appear exactly once in (6) with the same sign.  
\end{proof}
Our next step is to show that $t$ is counital.  

\begin{lem}
For any functor $F$, the diagram of natural transformations commutes:
\[
\xymatrix{&tF\ar[dl]_{\rm id}\ar[dr]^{\rm id}\ar[d]_{\xi}& \\
tF&ttF\ar[l]^{t\gamma}\ar[r]_{\gamma_t}&tF.}
\]
\end{lem}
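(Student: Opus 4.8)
The statement has two parts: $t\gamma \circ \xi = \mathrm{id}$ and $\gamma_t \circ \xi = \mathrm{id}$ as natural transformations $tF \to tF$. Since both $\xi$ and the two structure maps have been shown to be chain maps (Lemma \ref{l:chainmap} and the definition of $\gamma$), it suffices to check the identity degreewise on the explicit summand decompositions from Lemma \ref{l:ifiber}. The plan is to fix $k$, fix $T \subseteq {\bf n}$, and track what happens to a generator $x \in F({\bf X}(T))_{k+|T|} \subseteq tF({\bf X})_k$ under each composite, using the matrix bookkeeping from Definition \ref{n:matrix}.

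\textbf{The counit $\gamma_t \circ \xi$.} Recall $\gamma: tF({\bf X}) \to F({\bf X})$ is projection onto the $\emptyset$-summand; so $\gamma_t: ttF({\bf X}) \to tF({\bf X})$ is the map that, in the description $ttF({\bf X})_k = \bigoplus_{(U,V)} F({\bf X}(U\cup V))_{k+|U|+|V|}$, projects onto the summands with $V = \emptyset$. By the definition of $\xi$, the image of $x \in F({\bf X}(T))_{k+|T|}$ is the sum over $W \in M_{2n}(T)$ of $(-1)^{\mathrm{sgn}(W)} x$ placed in the summand indexed by the pair $(W_1, W_2)$. Among all $W \in M_{2n}(T)$, exactly one has second row zero, namely $W_2 = \emptyset$ and $W_1 = T$; that matrix has $\mathrm{sgn}(W) = 0$. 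Hence $\gamma_t \circ \xi$ picks out exactly the coefficient $+x$ in the summand indexed by $(T, \emptyset)$, which is precisely the original $F({\bf X}(T))$-summand of $tF({\bf X})$. So $\gamma_t \circ \xi = \mathrm{id}$.

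\textbf{The counit $t\gamma \circ \xi$.} Here $\gamma: tF \to F$ is applied "in the second $t$", so $t\gamma: ttF({\bf X}) \to tF({\bf X})$ keeps the summands indexed by $(U, V)$ with $U = \emptyset$, sending such a summand $F({\bf X}(\emptyset \cup V))_{k+|V|}$ to the $V$-summand $F({\bf X}(V))_{k+|V|}$ of $tF({\bf X})$. Among $W \in M_{2n}(T)$, exactly one has first row zero: $W_1 = \emptyset$, $W_2 = T$; again $\mathrm{sgn}(W) = 0$ since there is no pair $i<j$ with $w_{2i} = w_{1j} = 1$. So $t\gamma \circ \xi$ picks out the coefficient $+x$ landing in the $T$-summand of $tF({\bf X})$, giving $t\gamma \circ \xi = \mathrm{id}$.

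\textbf{Main obstacle.} The only real care needed is the precise identification of which $t\gamma$ and $\gamma_t$ mean — i.e., correctly translating "apply $\gamma$ to the inner copy of $t$" versus "apply $\gamma$ after the outer $t$" into conditions on the matrix rows ($U = \emptyset$ versus $V = \emptyset$), and confirming that in each case the unique surviving matrix in $M_{2n}(T)$ has sign zero so that the coefficient is exactly $+1$. Once that dictionary is set up via Remark \ref{n:tuple} and Definition \ref{n:matrix}, both triangles collapse to a one-line check; no differential bookkeeping is required since chain-map-ness is already known. I would write this out by first recording the explicit matrix descriptions of $t\gamma$ and $\gamma_t$, then observing that each selects a single nonzero term of $\xi(x)$ with trivial sign.
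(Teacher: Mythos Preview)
Your proof is correct and follows essentially the same approach as the paper's: identify $t\gamma$ and $\gamma_t$ as projections onto the summands indexed by $(\emptyset,V)$ and $(U,\emptyset)$ respectively, observe that in each case exactly one matrix of $M_{2n}(T)$ survives, and check that its sign is zero. The paper's argument is virtually identical, down to naming the two surviving matrices and verifying $\mathrm{sgn}=0$ for each.
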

\begin {proof}
Recall 
that  
\[
ttF({\bf X})_k=\bigoplus _{(U,V)\in \P({\bf n})\times \P({\bf n})}F({\bf X}(U\cup V))_{k+|U|+|V|}.
\]
The map $t\gamma$ is projection of the summands indexed by pairs of the form 
$(\emptyset, V)$ onto the term indexed by $V$ in $tF({\bf X})_k$.   
Similarly, $\gamma _t$ is the projection of  summands indexed by pairs of the form $(U, \emptyset)$. 

To see what the compositions $\gamma_t\circ \xi$ and $t\gamma\circ \xi$ do to elements of $tF({\bf X})$, recall that 
\[
tF({\bf X})_k=\bigoplus _{T\subseteq {\bf n}} F({\bf X}(T))_{k+|T|},
\]
and let $y\in F({\bf X}(T))_{k+|T|}$.  The image of $y$ under $\xi$ is  
\[ \sum _{V\in M_{2n}(T)}(-1)^{{\mathrm {sgn}}(V)}y.
\]
We note that the only $V\in M_{2n}(T)$ that corresponds to a pair of the form $(U,\emptyset)$ in $\P({\bf n})\times \P({\bf n})$ is the matrix $T'=[t_{ij}]$ with $t_{2j}=0$ for all $j$ and $t_{1j}=1$ iff $j\in T$.  So the image of $y$ under the composition $\gamma_t\circ \xi$ is 
\[
(-1)^{{\mathrm {sgn}}(T')}y.
\]
But ${\mathrm{sgn}}(T')=0$, so $\gamma_t\circ\xi$ is the identity map.  In a similar fashion, we see that the only element of $M_{2n}(T)$ that corresponds to a pair of the form $(\emptyset, U)$ is the matrix $S=[s_{ij}]$ with $s_{1j}=0$ for all $j$ and $s_{2j}=1$ iff $j\in T$.  Again ${\mathrm{sgn}}(S)=0$ and it follows that $t\gamma\circ \xi$ is the identity.  

\end{proof}

Finally, we show that $t$ is coassociative.  
\begin{lem}
For any functor $F$, the diagram of natural transformations commutes:
\[
\xymatrix {tF\ar[d]_{\xi}\ar[r]^{\xi}&ttF\ar[d]^{t\xi}\\
ttF\ar[r]_{\xi_t}&tttF.}
\]
\end{lem}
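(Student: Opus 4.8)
The plan is to evaluate both composites $t\xi\circ\xi$ and $\xi_t\circ\xi$ directly on generators, using the matrix bookkeeping of Definition~\ref{n:matrix}, and to reduce the assertion to a combinatorial identity about inversion counts of ordered decompositions of a subset of ${\bf n}$. As a preliminary, I would record the description of $tttF$ that runs parallel to the one of $ttF$ given just before the definition of $\xi$: iterating that identification once more, $tttF({\bf X})$ is the iterated fiber of the $(3n)$-cube $S\subseteq{\bf 3n}\mapsto F({\bf X}(S_1\cup S_2\cup S_3))$, so that in degree $k$
\[ tttF({\bf X})_k=\bigoplus_{(A,B,C)\in\P({\bf n})^{\times 3}}F({\bf X}(A\cup B\cup C))_{k+|A|+|B|+|C|}, \]
where $A$ indexes the outermost $t$, $B$ the middle one, and $C$ the innermost; equivalently, the summands with $A\cup B\cup C=T$ are indexed by $M_{3n}(T)$.

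Next I would fix $F$, an $n$-tuple ${\bf X}$, a subset $T\subseteq{\bf n}$, and a generator $y\in F({\bf X}(T))_{k+|T|}\subseteq tF({\bf X})_k$. Both composites begin with $\xi_F(y)=\sum_{V\in M_{2n}(T)}(-1)^{\mathrm{sgn}(V)}y$, the $V$-term lying in the slot $(V_1,V_2)$ of $ttF({\bf X})$ with $V_1$ the outer index. Now $\xi_t$ is the component of $\xi$ at the functor $tF$, so it splits the \emph{outer} index: the term in slot $(V_1,V_2)$ is sent to $\sum_{P\in M_{2n}(V_1)}(-1)^{\mathrm{sgn}(P)}$ times the term in slot $(P_1,P_2,V_2)$ of $tttF({\bf X})$. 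On the other hand $t\xi$ applies the functor $t$ to $\xi_F$, hence acts slotwise in the outer index and splits the \emph{inner} index: the term in slot $(V_1,V_2)$ is sent to $\sum_{Q\in M_{2n}(V_2)}(-1)^{\mathrm{sgn}(Q)}$ times the term in slot $(V_1,Q_1,Q_2)$. Since the functor $t$ acts on morphisms as the diagonal map on the direct-sum decomposition of Lemma~\ref{l:ifiber}, no signs appear beyond the $(-1)^{\mathrm{sgn}(-)}$ already built into $\xi$. In the first composite the pair $(V,P)$ corresponds bijectively to the ordered decomposition $(A,B,C)=(P_1,P_2,V_2)$ of $T$ (with $V_1=A\cup B$), and in the second the pair $(V,Q)$ corresponds to $(A,B,C)=(V_1,Q_1,Q_2)$ (with $V_2=B\cup C$). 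Writing $M_{S,S'}$ for the $2\times n$ matrix with first row the indicator of $S$ and second row the indicator of $S'$, this shows that on the summand of $tttF({\bf X})$ indexed by $T=A\sqcup B\sqcup C$ the two composites carry the coefficients $(-1)^{\mathrm{sgn}(M_{A\cup B,\,C})+\mathrm{sgn}(M_{A,\,B})}$ and $(-1)^{\mathrm{sgn}(M_{A,\,B\cup C})+\mathrm{sgn}(M_{B,\,C})}$ respectively.

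It then remains to verify the sign identity
\[ \mathrm{sgn}(M_{A\cup B,\,C})+\mathrm{sgn}(M_{A,\,B})=\mathrm{sgn}(M_{A,\,B\cup C})+\mathrm{sgn}(M_{B,\,C}). \]
From the formula $\mathrm{sgn}([m_{ij}])=|\{i<j\mid m_{2i}=m_{1j}=1\}|$ one reads off $\mathrm{sgn}(M_{S,S'})=|\{i<j\mid i\in S',\ j\in S\}|$, and expanding each side of the identity gives
\[ |\{i<j\mid i\in B,\,j\in A\}|+|\{i<j\mid i\in C,\,j\in A\}|+|\{i<j\mid i\in C,\,j\in B\}|, \]
the number of pairs of elements of $T$ that are out of order with respect to the ordering $A<B<C$. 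Thus the two sides agree even as integers, the coefficients match summand by summand, and the square commutes.

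I expect the only delicate point to be the bookkeeping of the middle paragraph: identifying precisely which summand of $tttF({\bf X})$ receives each term under the whiskered transformations $\xi_t$ and $t\xi$, and confirming that neither introduces a sign beyond those already carried by $\xi$. Once that is in place the content of the lemma is exactly that the two ways of refining an ordered splitting of $T$ in two steps compute the same inversion count --- the three-fold analogue of the cancellation argument used in the proof of Lemma~\ref{l:chainmap}.
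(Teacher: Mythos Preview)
Your proof is correct and takes essentially the same approach as the paper: both compute $t\xi\circ\xi$ and $\xi_t\circ\xi$ on a generator $y\in F({\bf X}(T))_{k+|T|}$, index the resulting sums by $M_{3n}(T)$, and reduce commutativity of the square to the sign identity $\mathrm{sgn}(M_{A\cup B,\,C})+\mathrm{sgn}(M_{A,\,B})=\mathrm{sgn}(M_{A,\,B\cup C})+\mathrm{sgn}(M_{B,\,C})$, which you verify explicitly as an inversion count while the paper simply says it follows from the definition of $\mathrm{sgn}$. Your identification of which row is split by $t\xi$ versus $\xi_t$ is the reverse of the paper's labeling, but this only exchanges the two sides of the same identity and does not affect the argument.
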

\begin{proof}
To start, we note that, as was the case with $ttF({\bf X})$, we can view $tttF({\bf X})$ as the iterated fiber of the $(3n)$-cube that assigns to the set $W\subseteq {\bf 3n}$ the object
$F({\bf X}(W_1\cup W_2\cup W_3))$ where $(W_1, W_2, W_3)$ is the triple of subsets of ${\bf n}$ that correspond to $W$ as described in Remark \ref{n:tuple}.

As we will be working with iterations of $\xi$, it will be convenient to view subsets of ${\bf n}$, ${\bf 2n}$, and ${\bf 3n}$ as matrices, as described in Definition \ref{n:matrix}.   We introduce some additional notation for dealing with these matrices.  

\begin{itemize}
\item If $U$ is a $j\times n$ matrix and $1\leq l\leq j$, then $U_l$ is the $1\times n$ matrix whose only row is the $l$th row of $U$.  If $j\geq 2$ and $1\leq s<t\leq j$, then $U_{st}$ is the $2\times n$ matrix whose first row is the $s$th row of $U$ and second row is the $t$th row of $U$.
\item    If $U$ is a $j\times n$ matrix and $V$ is a $m\times n$ matrix, then $U|V$ will denote the $(j+m)\times n$ matrix whose first $j$ rows are the $j$ rows of $U$ and whose last $m$ rows are the rows of $V$. 
  
\item If $U$ and $V$ are $j\times n$ matrices, $U+V$ will denote their sum.  
 \end{itemize}

As in the previous proof, we will determine the effect of the two compositions in the diagram on a fixed $y\in tF({\bf X}(T))_{k+|T|}\subseteq tF({\bf X})_k$.  The composition $t\xi\circ \xi$ sends $y$ to the sum
\begin{equation}\label{e:topright}
\sum _{V\in M_{2n}(T)}\sum _{W\in M_{2n}(V_{1})}(-1)^{\mathrm{sgn}(W)+\mathrm{sgn}(V)}y=\sum _{C\in M_{3n}(T)}(-1)^{\mathrm{sgn}(C_{12})+\mathrm{sgn}((C_1+C_2)|C_3)}y
\end{equation}
and the composition $\xi_t\circ \xi$ sends $y$ to the sum
\begin{equation}\label{e:leftbottom}
\sum _{A\in M_{2n}(T)}\sum_{B\in M_{2n}(A_{2})}(-1)^{\mathrm{sgn}(B)+\mathrm{sgn}(A)}y=\sum _{D\in M_{3n}(T)}(-1)^{\mathrm{sgn}(D_{23})+\mathrm{sgn}(D_1|(D_2+D_3))}y.
\end{equation}
So, it suffices to show that for a given matrix $M\in M_{3n}(T)$, 
\[
\mathrm{sgn}(M_{23})+\mathrm{sgn}(M_1|M_2+M_3)=\mathrm{sgn}(M_{12})+\mathrm{sgn}(M_1+M_2|M_3),
\]
but this follows immediately from the definition of $\mathrm {sgn}$ for such matrices.  
\end{proof}
This completes the proof of Theorem \ref{t:tiscotriple}.  


\end{document}


\begin{thebibliography}{1}

\bibitem{BJM} K.~Bauer, B.~ Johnson, and R.~McCarthy, {\em Cross effects and calculus in an unbased setting}, to appear in Trans. Amer. Math. Soc..

\bibitem{EKMM} A.~D.~Elmendorf, I.~Kriz, M.~A.~Mandell, J.~P.~May, {\em Rings, Modules, and Algebras in Stable Homotopy Theory}, Mathematical Surveys and Monographs, {\bf 47}, Amer. Math. Soc., Providence, RI, 1997.

\bibitem{G1} T. Goodwillie, {\em The first derivative of pseudoisotopy theory}, K-theory, {\bf 4}, 1 -- 27, 1990.

\bibitem{G2} T. Goodwillie, {\em Calculus II: Analytic functors}, K-theory, {\bf 5}, 295 -- 332, 1992.

\bibitem{G3} T. Goodwillie, {\em Calculus III: Taylor series}, Geometry and Topology, {\bf 7}, 645 -- 711, 2003.


\bibitem{Hirschhorn} P.~S.~Hirschhorn, {\em Model Categories and Their Localizations}, Mathematical Surveys and Monographs, {\bf 99}, Amer. Math. Soc., Providence, RI, 2003.

\bibitem{Hovey2} M.~Hovey, {\em Cotorsion pairs and model categories}, in {\it Interactions Between Homotopy Theory and Algebra}, Contemporary Mathematics, {\bf 436}, 277 --- 296, Amer. Math. Soc., Providence, RI, 2007.

\bibitem{Hovey} M.~Hovey, {\em Model Categories}, Mathematical Surveys and Monographs, {\bf 63}, Amer. Math. Soc., Providence, RI, 1999.

\bibitem{Hovey3} M.~Hovey, {\em Model category structures on chain complexes of sheaves}, Trans. Amer. Math. Soc., {\bf 353}, no. 6, 2441-2457, 2001.

\bibitem{HSS} M.~Hovey, B.~Shipley, J.~Smith, {\em Symmetric spectra}, J. Amer. Math. Soc., {\bf 13}, no. 1, 149 -- 208, 2000.

\bibitem{JM} B.~ Johnson, and R.~McCarthy, {\em Deriving calculus with cotriples}, Trans. Amer. Math. Soc., {\bf 356}, no. 2, 757 -- 803, 2003.

\bibitem{KM} M.~R.~Kantorovitz, R.~McCarthy, {\em The Taylor towers for rational algebraic $K$-theory and Hochschild homology}, Homology Homotopy Appl., {\bf 4}, no. 1, 191 -- 212, 2000.

\bibitem{Mac} S.~ MacLane, {\em Categories for the Working Mathematician}, GTM {\bf 5}, Springer, New York, 1978.

\bibitem{MO} A.~Mauer-Oats, {\em Algebraic Goodwillie calculus and a cotriple model for the remainder}, Trans. Amer. Math. Soc., {\bf 358}, no. 5, 1869 -- 1895, 2006.

\bibitem{MP} J.~P.~May and K.~Ponto, {\em More Concise Algebraic Topology: Localization, Completion, and Model Categories}, Chicago Lectures in Mathematics, University of Chicago Pess, Chicago, IL, 2012.

\bibitem{Q} D.~ Quillen, {\em Homotopical Algebra}, Lecture Notes in Math. {\bf 43}, Springer-Verlag, Berlin-Heidelberg-New York, 1967.


\bibitem{W} B.~ Walter, {\em Rational Homotopy Calculus of Functors}, Dissertation, Brown University, 2005.

\bibitem{We} C.~ Weibel, {\em An Introduction to Homological Algebra}, Cambridge University Press, Cambridge, 1994.
 


\end{thebibliography}
\end{document}


\section{Fat realizations in model categories or An attempt to answer Sarah's question}
In her e-mail of April 24th, Sarah asked whether we want a simplicial object to be Reedy cofibrant or just objectwise cofibrant when defining the fat realization.  I believe we're better off with  Reedy cofibrant because of the last result in this section, but it also seems to be the case that the resulting objects are related by a zigzag of weak equivalences.   So I begin with the following definition for fat realization.  Throughout this section I assume that $M$ is a  model category.  By proposition 16.6.22 of Hirschhorn, there is a framed model category structure on $M$.  We further assume that $M$ and $s.M$ have  functorial cofibrant replacements, $\widehat X\rightarrow X$ and $ \widehat{X_{\cdot}}\rightarrow X_{\cdot}$, respectively.  

\begin{defn}
Let $X_{\cdot}$ be a simplicial object in $M$.  By the fat realization of $X_{\cdot}$, we mean the object $||X_{\cdot}||$ in $M$ given by 
$$
||X_{\cdot}||:= \hocolim_{\Delta^{\rm op}}\widehat{X_{\cdot}}.
$$
\end{defn}
Is this a good definition for the fat realization?  
To answer this, let's review why we want to use fat realizations instead of the usual simplicial realization.   For functor calculus, we would like the following properties to hold:
\begin{enumerate}
\item  Suppose that $X\rightarrow Y$ is a weak equivalence and $F$ is a homotopy functor from a category $\C$ to $M$.  Then we want $P_nF(X)\rightarrow P_nF(Y)$ to be a weak equivalence.   In other words, if $F$ is a homotopy functor, then $P_nF$ should be a homotopy functor.
\item  Suppose that $F\rightarrow G$ is a weak equivalence of functors.   Then $P_nF\rightarrow P_nG$ should be a weak equivalence.  
\end{enumerate}
And, more generally, we would like to know that under some conditions on $X_{\cdot}$, such as $X_{\cdot}$ is Reedy cofibrant, the Bousfield-Kan map 
$$
\hocolim X_{\cdot}\rightarrow |X_{\cdot}|,
$$
where $|X_{\cdot}|$ denotes the usual realization, 
is a weak equivalence.  

However, the following result indicates that, up to weak equivalence, it doesn't matter if we define replace $X_{\cdot}$ with a Reedy cofibrant object or a levelwise cofibrant object.
\begin{lem}
Let $X_{\cdot}$ be a simplicial object in $M$ and let $\widetilde {X_{\cdot}}$ be the  simplicial object in $M$ obtained by applying cofibrant replacement levelwise.   Then there is a zigzag of weak equivalences between $||X_{\cdot}||$ and $\hocolim(\widetilde{X_{\cdot}})$.  
\end{lem}
\begin {proof}
Consider the diagram 
\[
\xymatrix{\widehat{X_{\cdot}}\ar[r]&X_{\cdot}\\
\widehat{\widetilde{X_{\cdot}}}\ar[u]\ar[r]&\widetilde{X_{\cdot}}\ar[u]}
\]
where all the arrows are weak equivalences and all the objects, except for $X_{\cdot}$, are levelwise cofibrant.   Applying Theorem 19.4.2 of Hirschhorn again, we obtain weak equivalences
\[
\xymatrix{||X_{\cdot}||&||\widetilde{X_{\cdot}}||\ar[r]\ar[l]&\hocolim \widetilde{X_{\cdot}}.}
\] 
\end{proof}

To establish the first set of properties, we need the following result.  
\begin{prop}\label{l:frwe}
Suppose that we have a weak equivalence $X_{\cdot}\rightarrow Y_{\cdot}$ in $s.M$.   Then the induced map $||X_{\cdot}||\rightarrow ||Y_{\cdot}||$ is a weak equivalence in $M$.
\end{prop}
\begin{proof}
Consider the diagram
\[
\xymatrix{\widehat{X_{\cdot}}\ar[r]\ar[d]&{X_{\cdot}}\ar[d]\\
\widehat{Y_{\cdot}}\ar[r]&{Y_{\cdot}}.}
\]
Since the horizontal and right vertical arrows are weak equivalences, it follows that the left vertical arrow is as well.  Since Reedy cofibrant objects are levelwise cofibrant,  and weak equivalences in $s.M$ are levelwise weak equivalences, $||X_{\cdot}||\rightarrow ||Y_{\cdot}||$ is a weak equivalence by Theorem 19.4.2(1) of Hirschhorn.  
\end{proof}

\begin{prop}
Assume that $\C$ is a model category and $X\rightarrow Y$ is a weak equivalence in $\C$.   Assume that $F$ is a homotopy functor that takes values in fibrant objects.   Then $P_nF(X)\rightarrow P_nF(Y)$ is a weak equivalence.  
\end{prop}
\begin{proof}
By repeated application of Theorem 19.4.2, one can show that $\perp_{n+1}^*F(X)\rightarrow \perp_{n+1}^*F(Y)$ is a weak equivalence of simplicial objects.  By Proposition \ref{l:frwe}, the induced map $||\perp_{n+1}^*F(X)||\rightarrow ||\perp_{n+1}^*F(Y)||$ is a weak equivalence.  The result follows.  
\end{proof}
By a similar argument, we obtain the following.

\begin{prop}
Let $F\rightarrow G$ be a weak equivalences of functors from $\C$ to $M$.  Then $P_nF\rightarrow P_nG$ is a weak equivalence.
\end{prop}
This establishes two of the three desired properties for the fat realization.   For the final property, I'd like the following to be true.
\begin{prop}
Let $X_{\cdot}$ be a Reedy cofibrant object in $s.M$.  Then the natural map $||X_{\cdot}||\rightarrow |X_{\cdot}|$ is a weak equivalence.   
\end{prop}
This is {\it almost} Theorem 19.8.7 of Hirschhorn, except that that theorem requires that $M$ be a {\it simplicial} model category.  I wonder if this is typo, since the proof states that the result follows from Corollary 19.7.5 which doesn't seem to require that $M$ be a simplicial model category.  
\section {$J\circ \eta^*$ commutes with realizations}
THIS SECTION HAS NOT BEEN CORRECTED OR UPDATED.
Let $\eta:k\to B$ be a fixed cofibration in $\DGA$, and consider $\DGA^\eta$.  We let $J:\DGA^\eta\to {\sc Ch(R)}$ be the functor defined by 
\[ J(X) = \hfib \left( U(X) \to U(B) \right)\]
where $U:\DGA\to {\sc Ch(R)}$ is the functor which forgets the multiplication.    

\begin{prop}
Let $X_{\cdot}$ be a simplicial object in $\DGA^{{\rm id}_B}$.   Then the natural map $||J\circ\eta^*(X_{\cdot})||\rightarrow J\circ\eta^*(||X_{\cdot}||)$ is a weak equivalence.  
\end{prop}

\begin{proof}
Recall that we use the fat realization, i.e., $||X_{\cdot}||$ is obtained by cofibrantly replacing $X_{\cdot}$ objectwise and then taking the homotopy colimit over $\Delta^{\rm op}$.  We start by assuming that $X_{\cdot}$ is objectwise cofibrant in $\DGA^{{\rm id}_B}$.  Since $\eta$ is a cofibration and composition preserves cofibrations, it follows that $\eta^*(X_{\cdot})$ is objectwise cofibrant in $\DGA^{\eta}$.  Applying $\eta^*$ levelwise to $X_{\cdot}$ yields the maps of simplicial objects (with $k_{\cdot}$ and $B_{\cdot}$ constant simplicial objects in $k$ and $B$ respectively)
\[k_{\cdot}\rightarrow B_{\cdot}\rightarrow X_{\cdot}
\]
that levelwise are the composites 
\[
\xymatrix{k\ar[r]^{\eta}& B\ar[r]& X_n.}
\]
Since the map $k_{\cdot}\rightarrow B_{\cdot}$ is $\eta$ levelwise, it is $\eta$ after realization.   So realizing after applying $\eta$ yields the maps
\[\xymatrix{k\ar[r]^{\eta}&B\ar[r]&||X_{\cdot}||}
\]
which is $\eta^*(||X_{\cdot}||)$.  Hence $||\eta^*(X_{\cdot})||\simeq \eta^*||X_{\cdot}||$.  

So, it suffices to show that for a simplicial object $Y_{\cdot}$ in $\DGA^{\eta}$, $||J(Y_{\cdot})||\simeq J(||Y_{\cdot}||)$.     By definition, the diagram below is cartesian
\[
\xymatrix {J(||Y_{\cdot}||)\ar[r]\ar[d]&U(||Y_{\cdot}||)\ar[d]\\
0\ar[r]&U(||B_{\cdot}||).}
\]
Hence, $J(||Y_{\cdot}||)\simeq {\rm holim}(U(||Y_{\cdot}||)\rightarrow U(||B_{\cdot}||)\leftarrow 0)$.
But,
\[\xymatrix{J(Y_{\cdot})\ar[r]\ar[d]&U(Y_{\cdot})\ar[d]\\
0\ar[r]&U(B_{\cdot})}
\]
is levelwise cartesian, hence cartesian, hence cocartesian.    So ${\rm hocolim}(0\leftarrow J(Y_{\cdot})\rightarrow U(Y_{\cdot}))\simeq U(B_{\cdot})$. Realizing gives us   
$$||{\rm hocolim}(0\leftarrow J(Y_{\cdot})\rightarrow U(Y_{\cdot}))||\simeq ||U(B_{\cdot})||.$$ 
On the other hand, hocolims commute, so 
\begin{align*}
||{\rm hocolim}(0\leftarrow J(Y_{\cdot})\rightarrow U(Y_{\cdot})||&\simeq 0\leftarrow ||J(Y_{\cdot})||\rightarrow ||U(Y_{\cdot})||\\
&\simeq {\rm hocolim}(0\leftarrow ||J(Y_{\cdot})||\rightarrow ||Y_{\cdot}||)\\
\end{align*}
 So the square
 \[
 \xymatrix{||J(Y_{\cdot})||\ar[r]\ar[d]&||Y_{\cdot}||\ar[d]\\
 0\ar[r]&B}
 \]
 is cocartesian and hence, cartesian.   Thus, 
 \[
 ||J(Y_{\cdot})||\simeq \holim(||Y_{\cdot}||\rightarrow B\leftarrow 0)\simeq J(||Y_{\cdot}||),
 \]
 as desired.
 \end{proof}
 
 As a consequence of this proposition, we may apply Corollary 6.8 of [BJM] to conclude that the Goodwillie and cotriple calculus of the functor $J\circ \eta^*$ are weakly equivalent.  With this, and Randy's theorem, we can show that 
 \[
 D_1^{\eta}J(k)\simeq \Omega_BQ^{L}(A).
 \]
 That is, the last paragraph of section 3 is now a theorem instead of a conjecture.